    \declaretheorem{lemma}
    \declaretheoremstyle[qed=$\square$]{definitionwithend}
    \declaretheorem[style=definitionwithend]{remark}
\DeclareMathOperator{\DR}{DR}
\DeclareMathOperator{\SAA}{SAA}
\DeclareMathOperator{\dist}{dist}
\DeclareMathOperator{\intt}{int}
\DeclareMathOperator{\cl}{cl}
\DeclareMathOperator{\Proj}{Proj}
\newcommand{\vc}{\mathbf{c}}
\newcommand{\va}{\mathbf{a}}
\newcommand{\vA}{\mathbf{A}}
\newcommand{\vr}{\mathbf{r}}
\newcommand{\vz}{\mathbf{z}} %
\newcommand{\vx}{\mathbf{x}} %
\newcommand{\vy}{\mathbf{y}} %
\newcommand{\vpi}{\bm{\pi}} %
\newcommand{\vlambda}{\bm{\lambda}} %
\newcommand{\vmu}{\bm{\mu}} %
\newcommand{\vb}{\mathbf{b}} %
\newcommand{\vu}{\mathbf{u}}
\newcommand{\vrho}{\bm{\rho}}
\newcommand{\vxi}{\bm{\xi}}
\newcommand{\bbE}{\mathbb{E}}
\newcommand{\bbP}{\mathbb{P}}
\newcommand{\bbR}{\mathbb{R}}
\newcommand{\R}{\mathbb{R}}
\newcommand{\cF}{\mathcal{F}}
\newcommand{\cS}{\mathcal{S}}
\newcommand{\cX}{\mathcal{X}}
\newcommand{\epr}{\hfill\hbox{\hskip 4pt \vrule width 5pt height 6pt depth 1.5pt}\vspace{0.0cm}\par}
\def\true{1}
\def\flagJournal{0} %
\newcommand {\beqn}{\begin{equation}}\newcommand {\eeqn}{\end{equation}}
\newcommand {\beqan}{\begin{eqnarray}}\newcommand {\eeqan}{\end{eqnarray}}
\newcommand {\beqa}{\begin{eqnarray*}}\newcommand {\eeqa}{\end{eqnarray*}}
\newcommand{\skipit}  [1] {}
\newtheorem{theorem}{Theorem}%
\begin{document}
\title{Strong Formulations for Distributionally Robust Chance-Constrained Programs with Left-Hand Side Uncertainty under Wasserstein Ambiguity}
\author{
	Nam Ho-Nguyen\thanks{Discipline of Business Analytics, The University of Sydney Business School, Sydney, Australia, \url{nam.ho-nguyen@sydney.edu.au}}
	\and
	Fatma K{\i}l{\i}n\c{c}-Karzan\thanks{Tepper School of Business, Carnegie Mellon University, Pittsburgh, PA 15213, USA, \url{fkilinc@andrew.cmu.edu}}
	\and 
	Simge K\"{u}\c{c}\"{u}kyavuz\thanks{Industrial Engineering and Management Sciences, Northwestern University, Evanston, IL 60208, USA, \url{simge@northwestern.edu}}
	\and
	Dabeen Lee\thanks{Discrete Mathematics Group, Institute for Basic Science (IBS), Daejeon 34126, Republic of Korea, \url{dabeenl@ibs.re.kr}}
}	

\date{\today}

\maketitle

\begin{abstract}
Distributionally robust chance-constrained programs (DR-CCP) over Wasserstein ambiguity sets exhibit attractive out-of-sample performance and admit big-$M$-based mixed-integer programming (MIP) reformulations with conic constraints. However, the resulting formulations  often suffer from scalability issues as sample size increases. To address this shortcoming, we derive stronger formulations that scale well with respect to the sample size. Our focus is on ambiguity sets  under the so-called left-hand side (LHS) uncertainty, where the uncertain parameters affect the coefficients of the decision variables in the linear inequalities defining the safety sets.  The interaction between the uncertain parameters and the variable coefficients in the safety set definition causes  challenges in strengthening the original big-$M$ formulations.  By exploiting the connection between nominal chance-constrained programs and DR-CCP, we obtain strong formulations with significant enhancements. In particular, through this connection, we derive a linear number of valid inequalities, which can be immediately added to the formulations to obtain improved  formulations in the original space of variables. In addition, we suggest a quantile-based strengthening procedure that allows us to reduce the big-$M$ coefficients drastically.  Furthermore, based on this procedure,   we propose an exponential class of inequalities that can be separated efficiently within a branch-and-cut framework. The quantile-based strengthening procedure can be expensive. Therefore, for the special case of covering and packing type problems, we identify an efficient scheme to carry out this procedure. We demonstrate the computational efficacy of our proposed formulations  on two classes of problems, namely  stochastic portfolio optimization and resource planning.

\end{abstract}

\section{Introduction}\label{sec:intro}

Chance-constrained programming is an important paradigm in optimization under uncertainty. It acknowledges that it may not be possible to satisfy all constraints of a system due to the inherent uncertainty in the model parameters; instead, it aims to satisfy the system constraints with high probability.  The generic  form of a chance-constrained  program (CCP)  is given by 
\begin{equation}\label{eq:ccp}
\min_{\vx} \left\{\vc^\top \vx:~  \vx \in \mathcal{X}, ~ \bbP^*[\vxi \not\in \cS(\vx)] \leq \epsilon \right\}. \tag{CCP}
\end{equation}
Here, $\cX \subset \bbR^L$ is a domain for the vector of decision variables $\vx$,  $\vxi \in \bbR^K$ is a random vector distributed according to $\bbP^*$, and $\epsilon \in (0,1)$ is the risk tolerance for the random variable $\vxi$ falling outside a decision-dependent safety set given by $\cS(\vx) \subseteq \bbR^K$. 

A main challenge in formulating and solving~\eqref{eq:ccp}~problems is that the  distribution $\bbP^*$ is typically unknown or else is not efficiently computable in high dimensions. Often, in practice, this issue is addressed by approximating $\bbP^*$ with an \emph{empirical distribution} $\bbP_N$ obtained by  sampling $N$ independent and identically distributed (i.i.d.) samples $\{ \vxi_i \}_{i \in [N]}$  from $\bbP^*$, where $[N]:=\{1,\ldots,N\}$. This leads to the natural and popular \emph{Sample Average Approximation} (SAA) formulation obtained by replacing $\bbP^*$  with  $\bbP_N$ in~\eqref{eq:ccp} (see \cref{sec:nominal}). 
This procedure has been shown to be statistically consistent \citep{Cal1,campi,sample}, but is also known to be quite sensitive to the samples drawn unless $N$ is quite large, in which case the resulting formulation is computationally intractable.  
Consequently, finding a solution to~\eqref{eq:ccp} that is robust to errors in approximating $\bbP^*$ with $\bbP_N$ is of interest. To address this issue, a recent growing stream of research studies the following \emph{distributionally robust chance-constrained program}~\eqref{eq:dr-ccp}:
\begin{equation}\label{eq:dr-ccp}
\min_{\vx} \left\{\vc^\top \vx:~  \vx \in \mathcal{X}, ~ \sup_{\bbP \in \cF} \bbP[\vxi \not\in \cS(\vx)]\leq \epsilon \right\}, \tag{DR-CCP}
\end{equation}

where $\cF$ is an ambiguity set of distributions on $\bbR^K$. 
In~\eqref{eq:dr-ccp}, the set $\cF$ plays a critical role. Usually, $\cF:=\cF_N(\theta)$ with a parameter $\theta>0$ is selected such that the empirical distribution $\bbP_N$ is contained in it, and $\theta$ governs the size of the ambiguity set (and consequently the degree of conservatism of~\eqref{eq:dr-ccp}). See \citet{Rahimian2019DistributionallyRO} and references therein for a survey on distributionally robust optimization, in particular, the properties of~\eqref{eq:dr-ccp} and existing solution methods. 

One of the most commonly studied set $\cF_N(\theta)$ is the so-called \emph{Wasserstein ambiguity set}, which 
is defined by the Wasserstein distance ball of radius $\theta$ around the empirical distribution $\bbP_N$. The Wasserstein ambiguity set gained popularity due to its desirable statistical properties and advantages over  other ambiguity sets based on moments, $\phi$-divergences, unimodality, or support; see e.g., \cite{EOO03,JG16,calafiore:jota06,Hanasusanto2015,ZhangJiangShen2018,Li2019,Xie18}. Furthermore, Wasserstein uncertainty set is also attractive because  the dual representation for the worst-case probability $\bbP[\vxi \not\in \cS(\vx)]$ over the  ambiguity set $\bbP \in \cF_N(\theta)$ \cite{gao2016distributionally,BlanchetMurthy2019,MohajerinEsfahaniKuhn2018} can be used to derive deterministic non-convex reformulations of \eqref{eq:dr-ccp}~\cite{HotaEtAl2019,xie2018distributionally,chen2018data}. For example, for certain linear forms of safety sets $\cS(\cdot)$, \citet{chen2018data} and \citet{xie2018distributionally} show that  \eqref{eq:dr-ccp} can be represented as a mixed-integer program (MIP) with big-$M$ coefficients, which enables, in theory, modeling and solving these problems  with black-box solvers.  In practice, however, the resulting MIPs even with moderate sample sizes (e.g., $N=100$) cannot be solved in reasonable time  with commercial MIP solvers. 
 
In the literature, the scalability challenge of~\eqref{eq:dr-ccp} with Wasserstein ambiguity is addressed by exploiting further problem structures. %
\citet{xie2018distributionally} considers the case where all the decision variables in~\eqref{eq:ccp} are binary, for which he derives a big-$M$-free formulation that leads to notable computational benefits. \citet{WangLiMehrotra2020} impose the assumption that the support of $\vxi$ is finite and all the decision variables are binary when formulating distributionally robust assignment problems with the so-called left-hand side (LHS) uncertainty, in which the uncertain parameters $\vxi$ affect the coefficients of the decision variables in the safety set $\cS(\vx)$. They further assume that chance constraints are given \emph{individually}, i.e., each chance constraint takes a single inequality.
\citet{ZhangDong2020} also use individual chance constraints to model
an uncertain renewable load control problem with binary variables and the right-hand side (RHS) uncertainty structure, where the uncertain parameters do not interact with the coefficients of the variables in the safety set, and propose some enhancements to the MIP reformulation. \citet{JiLejeune2019} give MIP formulations of \eqref{eq:dr-ccp}  under %
other structural assumptions on the support of $\vxi$. In contrast to these work, we do not assume or exploit binary problem structure,  place no assumptions on the support of $\vxi$, and we consider \emph{joint} chance constraints under LHS uncertainty, i.e., a chance constraint may take a system of inequalities.

In our previous work~\cite{rhs2020}, we observe that the SAA formulation can be cast as   
 \cref{eq:dr-ccp} with radius $\theta=0$, since $\cF_N(0)=\{\bbP_N\}$ under Wasserstein ambiguity, and that for $\theta>0$, the SAA formulation is a relaxation of \cref{eq:dr-ccp}. We then
 exploit this connection between \cref{eq:dr-ccp} and SAA to address the (easier) case of  RHS uncertainty. Our proposed approach in~\cite{rhs2020} provides stronger  formulations and valid inequalities for \cref{eq:dr-ccp}, which are instrumental in solving both instances that are an order of magnitude larger than those in the literature,  from 100s of scenarios to  1000s of scenarios, and instances that are difficult even for small number of scenarios (100s) due to the number of original decision variables and their problem structure. In this paper, we further explore this connection between \cref{eq:dr-ccp} and SAA to address the more difficult case of  LHS  uncertainty. As a result of the more complex structure of LHS uncertainty, our developments here differ from the RHS uncertainty case in  \cite{rhs2020}, and we highlight these wherever relevant.
 
\subsection*{Contributions and outline}
In~\cref{sec:problem} we formally describe our problem and the safety sets of interest.  Our main contributions are summarized as follows.
\begin{itemize}
\item  We delineate the relationship between SAA and~\eqref{eq:dr-ccp} under Wasserstein ambiguity with LHS uncertainty in \cref{sec:nominal}. We note that while recognizing that SAA is a relaxation of DR-CCP is not novel in itself, the main innovation of our work is to build a \emph{precise link} between the CCP formulation and the DR-CCP formulation in terms of the binary variables used in the CCP formulation. Using this connection, we obtain %
a stronger formulation for~\eqref{eq:dr-ccp} by adding linearly many valid inequalities to the standard MIP formulation from the literature~\cite{chen2018data,xie2018distributionally}, given in~\eqref{eq:joint}, \emph{without} increasing the number of variables. %
Our formulation~\cref{eq:joint-knapsack} is an exact reformulation of~\eqref{eq:dr-ccp} %
for the closed safety sets %
and gives a tighter relaxation %
for the open safety sets; %
see \cref{thm:knapsack-valid-joint}.
We then exploit this link to further strengthen the DR-CCP formulation (\cref{sec:quantile}).

Our previous paper \citep{rhs2020} also provides a link between CCP and DR-CCP formulations in the \emph{right-hand side uncertainty} case, yet the left-hand side uncertainty case that we consider in the present paper is considerably more complicated. The main difference can be seen through  \cref{lemma:joint-valid} that states that the formulation of \eqref{eq:dr-ccp} in equation~\eqref{eq:joint} from previous literature is not exact in the case of LHS uncertainty. %
However, in the case of RHS uncertainty the associated MIP formulations are exact; see \cref{rem:joint-valid-RHS-compare}. \cref{sec:nominal} aims to  clarify this distinction. %

\citet[Theorem 3]{xie2018distributionally} provides a better relaxation of~\eqref{eq:dr-ccp} than the ordinary CCP by using the CVaR interpretation of the DR-CCP formulation and its value-at-risk relaxation. The author does not use this relaxation to derive an exact reformulation of DR-CCP---doing so would require a new set of binary variables and associated big-M constraints. In contrast,  it bears repeating that our aim is \emph{not} only to provide a stronger relaxation, but also to link the CCP and DR-CCP formulations without introducing new variables, and exploit this link to solve the \emph{exact} DR-CCP formulation effectively. 
	In \cref{sec:other_relaxations}, we present an argument that the relaxation in \citep[Theorem 3]{xie2018distributionally} cannot be linked with the DR-CCP formulation in the same manner as what we do in this paper.  Similarly, \citet{ChenXie2020} also use the observation that SAA is a relaxation of DR-CCP, but they do not reveal the connection regarding the binary variables.

\item In \cref{sec:quantile}, we exploit this relationship between SAA and~\eqref{eq:dr-ccp} to suggest a further quantile-based strengthening of the formulation. %
 In particular, the connection with SAA exposes a \emph{mixing substructure} in the MIP reformulation of \eqref{eq:dr-ccp}. For the RHS uncertainty case, exploiting this mixing substructure to reduce the big-$M$ coefficients entails simply the sorting of the nominal right-hand side parameters. Due to the interaction of the random parameters with the decision variables, this procedure cannot be immediately applied to the case of LHS uncertainty (see \cref{rem:mixing-RHS-comparison} for a discussion of the differences in this procedure against our previous paper~\citep{rhs2020}).
 Hence, for \eqref{eq:dr-ccp} with LHS uncertainty we suggest a more involved quantile-based strengthening framework in~\cref{sec:quantile}, which lets us derive a further improved formulation~\eqref{eq:joint-k-reduced}.
As opposed to the previous literature on quantile-based strengthening, our results exploit a unique structure stemming from the MIP formulation of \eqref{eq:dr-ccp}.  
Through these developments, we are able to perform significant coefficient strengthening  and enhance our MIP formulations with an additional exponential class of valid inequalities;  see \cref{thm:improved-formulation} and inequalities~\eqref{eq:mixing}. 
We note that the mixing procedure has  been applied in the context of basic CCP before; see e.g.,  \citep{luedtke2010integer,luedtke2014branch-and-cut}. In this paper, we do utilize it again, but in the new context of strengthening~\eqref{eq:dr-ccp}.

\item In certain cases, the most powerful version of quantile strengthening procedure to generate the coefficients of the mixing set may require us to solve a number of subproblems, which are sometimes large optimization problems themselves. In the most general case, we would need $N^2$ calls to an LP solver to compute these coefficients. In \cref{sec:cover-pack}, we consider the special case of covering and packing constraints and show that these special structures enable us to develop a more efficient coefficient strengthening procedure which does not require us to use an LP solver. This expedites the process of deriving formulation~\eqref{eq:joint-k-reduced} for the case of covering and packing constraints. We note that these classes of problems are sufficiently prevalent \citep{bicriteria,binary-packing,ZhangJiangShen2018} in this literature as well.

\item Finally, in  \cref{sec:experiments},  we assess the computational impact of our theoretical developments on stochastic portfolio optimization and resource planning problems. We have conducted extensive numerical experiments to demonstrate the effectiveness of our proposed approach. Our numerical results show that our formulation significantly improves upon the existing formulations.

For the portfolio optimization problem, we test instances with $N\in\{100,300,500,1000\}$ samples. We observe that our framework reduces the overall solution time remarkably compared to the existing formulations, regardless of sample size. In particular, when $N\in\{500,1000\}$, we see that while none of the instances can be solved to optimality within one hour with the existing formulations, our proposed approach attain an optimal solution for most of the instances within a couple of minutes. The instances with $N\in\{100,300\}$ are easier, but we still observe that our formulation performs much better.

The resource planning problem instances are much harder than the portfolio optimization instances, but we still obtain similar results showing the efficacy of our proposed approach against the existing formulations. For the resource planning problem, even with  sample sizes as small as $N=100$, 75 out of the 100 instances are not solved to optimality, and the basic formulation terminates  with 45-77\% optimality gap after an hour.  Furthermore, we show that none of the instances with $N=300$ can be solved to optimality with the basic formulation and the algorithm  terminates with over 90\% optimality gap after an hour of computing with the existing formulations. For this harder problem class our formulation solves 98 (instead of 25) of  the 200 total  instances are solved to optimality and the largest optimality gap is 14\%.

 \end{itemize}

\section{Problem formulation}\label{sec:problem}

We consider Wasserstein ambiguity sets $\cF_N(\theta)$ defined as the $\theta$-radius Wasserstein ball of distributions on $\bbR^K$ around  the empirical distribution $\bbP_N$. We use the \emph{1-Wasserstein distance}, based on a norm $\|\cdot\|$, between two distributions $\bbP$ and $\bbP'$. This is defined as follows:
\begin{equation}\label{eq:wasserstein-dist}
d_W(\bbP,\bbP') := \inf_{\Pi} \left\{ \bbE_{(\vxi,\vxi') \sim \Pi}[\|\vxi - \vxi'\|] : \Pi \text{ has marginal distributions } \bbP, \bbP' \right\}.
\end{equation}
Then, the Wasserstein ambiguity set is 
\[\cF_N(\theta) := \left\{ \bbP : d_W(\bbP_N,\bbP) \leq \theta\right\}. \] 
Given a decision $\vx \in \cX$ and random realization $\vxi \in \bbR^K$, the \emph{distance from $\vxi$ to the unsafe set, $\R^K\setminus \cS(\vx)$,} is defined as
\begin{equation}\label{eq:distance}
\dist(\vxi,\cS(\vx)) := \inf_{\vxi' \in \bbR^K} \left\{ \|\vxi - \vxi'\| : \vxi' \not\in \cS(\vx) \right\}.
\end{equation}
It is important to highlight that here $\dist(\vxi,\cS(\vx))$ computes the distance from $\vxi$ to the unsafe set, \emph{not to the set $\cS(\vx)$}. Despite this, we chose to use this notation to emphasize that the distance function depends on the realization of the random parameter $\vxi$ and the decision-dependent safety set $\cS(\vx)$.
In the remainder of the paper, %
we assume that the sample $\{\vxi^i\}_{i \in [N]}$, the risk tolerance $\epsilon \in (0,1)$ and the radius $\theta > 0$ are fixed. 
We denote the feasible region of \eqref{eq:dr-ccp} as follows:
\begin{equation}
\cX_{\DR}(\cS) := \left\{ \vx \in \cX :~ \sup_{\bbP \in \cF_N(\theta)} \bbP[\vxi \not\in \cS(\vx)] \leq \epsilon \right\}.\label{eq:dr-ccp-region}
\end{equation}
In this  notation we make the dependence on the safety set  function $\cS$ explicit because the valid inequalities we will introduce have an explicit dependence on the safety set. 

It was recently shown that the distributionally robust chance constraint in \eqref{eq:dr-ccp}, and therefore~\eqref{eq:dr-ccp-region}, can be reformulated in a computationally tractable form~\cite{chen2018data,xie2018distributionally}. These reformulation results are obtained based on earlier developments on duality theory for Wasserstein distributional robustness~\cite{BlanchetMurthy2019,gao2016distributionally}. More precisely, whenever $\cS(\vx) \subseteq \bbR^K$ is an arbitrary open set for each $\vx \in \cX$ and $\theta > 0$, \citet[Theorem 3]{chen2018data} show that $\cX_{\DR}(\cS)$ can be formulated as \begin{align}\label{eq:cc-distance-formulation}
	\cX_{\DR}(\cS) = \left\{ \vx \in \cX : \begin{aligned}
		&\quad \exists\  t \geq 0, \ \vr \geq \bm{0},\\
		&\quad \dist(\vxi^i,\cS(\vx)) \geq t - r^i, \ i \in [N],\\
		&\quad \epsilon\, t \geq \theta + \frac{1}{N} \sum_{i \in [N]} r^i
	\end{aligned} \right\} .
\end{align}
Under the additional restriction $t > 0$, another similar formulation also holds. \citet[Proposition 1]{xie2018distributionally} derives the same formulation for the case of \emph{closed linear safety sets}, which we will define later in this section. In fact, \eqref{eq:cc-distance-formulation} holds regardless of whether $\cS(\vx)$ is open or closed because  \citet[Proposition 3]{gao2016distributionally} show that for given $\cS(\vx)$,
\[
\sup_{\bbP \in \cF(\theta)} \bbP[\vxi \not\in \intt \cS(\vx)] = \sup_{\bbP \in \cF(\theta)} \bbP[\vxi \not\in \cS(\vx)] = \sup_{\bbP \in \cF(\theta)} \bbP[\vxi \not\in \cl \cS(\vx)]
\] and 
\begin{equation}\label{eq:dist-invariant}
\dist(\vxi, \intt \cS(\vx)) = \dist(\vxi, \cS(\vx)) = \dist(\vxi, \cl \cS(\vx)),
\end{equation} 
where $\intt \cS(\vx)$ and $\cl \cS(\vx)$ denote the interior and closure of $\cS(\vx)$, respectively. To implement formulation~\eqref{eq:cc-distance-formulation} for~\eqref{eq:dr-ccp}, it is crucial to represent the constraints $\dist(\vxi^i,\cS(\vx)) \geq t - r^i$  in a computationally tractable form. To do so, it is important to understand the distance function $\dist(\vxi,\cS(\vx))$, which depends on not only the random parameter $\vxi$ and the decision vector $\vx$ but also the structure of the safety set.

Of particular interest is \emph{linear safety sets}, defined by a finite set of linear inequalities.  
In this case,  the decision-dependent safety set $\cS(\vx)$ is of the form of either $\cS^o(\vx)$ for \emph{open} safety sets  or $\cS^c(\vx)$ for  \emph{closed} safety sets, respectively, where
\begin{subequations}\label{eq:safety}
\begin{align}
\cS^o(\vx) &:= \left\{ \vxi :~ (\vb-\vA^\top \vx)^\top \vxi_p + d_p - \va_p^\top \vx > 0, \ p \in [P] \right\},\label{eq:safety-open}\\
\cS^c(\vx) &:= \left\{ \vxi :~ (\vb-\vA^\top \vx)^\top \vxi_p + d_p - \va_p^\top \vx \geq 0, \ p \in [P] \right\}.\label{eq:safety-closed}
\end{align}
\end{subequations}
As both open and closed safety sets have attracted attention in the recent literature; namely, \citet{chen2018data} consider open safety sets, while \citet{xie2018distributionally} consider closed safety sets. We will examine both types of safety sets in this paper as well. As we will see in \cref{sec:nominal}, the addition of the CCP constraints to the~\eqref{eq:dr-ccp} formulation has a different impact on the exactness of the resulting formulation depending on whether the set is open or closed. In~\eqref{eq:safety}, $P$ determines the number of inequalities defining the linear safety set. When $P=1$, we refer to the chance constraint $\bbP^*[\vxi \not\in \cS(\vx)] \leq \epsilon$ as an \emph{individual chance constraint}, and when $P>1$, we call $\bbP^*[\vxi \not\in \cS(\vx)] \leq \epsilon$ as a \emph{joint chance constraint}. The random vector $\vxi$ in~\eqref{eq:safety} consists of $P$ subvectors $\vxi_1,\ldots,\vxi_P$, each of which is associated with an inequality in the safety set description. The classical literature on CCP typically considers different settings depending on whether or not the linear inequalities have uncertainty in the coefficients of the decision variables. When $\vA\neq\bm{0}$, we say that the chance constraint has \emph{left-hand side (LHS) uncertainty}. When $\vA=\bm{0}$, then we have $\vb\neq\bm{0}$ so that inequalities have random data, in which case, we say that the chance constraint has \emph{right-hand side (RHS) uncertainty}. 
When $A,b \neq 0$, this model considers the most general case of both LHS and RHS uncertainty simultaneously. However, following the standard terminology, we will refer to this general model as the LHS uncertainty case.

In this paper, we focus on linear safety sets given by~\eqref{eq:safety} with LHS uncertainty. In the case of linear safety sets of form~\eqref{eq:safety},  formulation~\eqref{eq:cc-distance-formulation} admits a tractable reformation of~\eqref{eq:dr-ccp}. \citet{chen2018data} focus on the open safety set $\cS^o(\vx)$ given by~\eqref{eq:safety-open}, for which they provide a MIP reformulation. Independently, \citet{xie2018distributionally} considers the closed safety set $\cS^c(\vx)$ given by~\eqref{eq:safety-closed} and arrive at a MIP reformulation that is almost identical to the MIP formulation of \citet{chen2018data}. When $\cS(\vx)=\cS^o(\vx)$ and $\vb\neq \vA^\top \vx$, \citet[Lemma 2]{chen2018data} prove that the associated distance function is given by
\begin{equation}
\dist(\vxi,\cS(\vx)) = \max\left\{ 0,\ \min_{p \in [P]} \frac{(\vb-\vA^\top \vx)^\top \vxi_p + d_p - \va_p^\top \vx}{\|\vb-\vA^\top \vx \|_*} \right\},\label{eq:distance-linear}
\end{equation}
where $\|\cdot\|_*$ represents the norm dual to $\|\cdot\|$. 
\citet[Theorem 1]{xie2018distributionally} argues that~\eqref{eq:distance-linear} holds even when $\cS(\vx)=\cS^c(\vx)$ and $\vb\neq \vA^\top \vx$; note that this can also be deduced from~\eqref{eq:dist-invariant}. On the other hand, if $\vb = \vA^\top \vx$, we must compute the distance function through the original definition given by~\eqref{eq:distance}, and its characterization differs depending on whether we consider $\cS^o(\vx)$ or $\cS^c(\vx)$. We note that this issue is not present in the case of RHS uncertainty because $\vb\neq\bm{0}$ and $\vA=\bm{0}$ automatically imply that $\vb \neq \vA^\top \vx$ for all $\vx$, and so the distance function is precisely~\eqref{eq:distance-linear} for all $\vx$.

Assuming that $\vb \neq \vA^\top \vx$ for any $\vx \in \cX$, we can substitute the formula~\eqref{eq:distance-linear} for the distance function in the reformulation~\eqref{eq:cc-distance-formulation} of~\eqref{eq:dr-ccp} with joint chance constraints. Due to the max-terms in~\eqref{eq:distance-linear}, the resulting formulation is non-convex. Nevertheless,  under the assumption that $\vb \neq \vA^\top \vx$ for any $\vx \in \cX$, by introducing binary variables and big-$M$ constraints to model the distances and the max-terms therein,  \citet[Theorem 2]{xie2018distributionally} obtains the following equivalent MIP reformulation of~\eqref{eq:dr-ccp}.
\begin{subequations}\label{eq:joint}
	\begin{align}
		\min\limits_{\vz, \vr, t, \vx} \quad & \vc^\top \vx\label{joint:obj}\\
		\text{s.t.}\quad & \vz \in \{0,1\}^N,\ t \geq 0, \ \vr \geq \bm{0},\ \vx \in \mathcal{X},\label{joint:vars}\\
		& \epsilon\, t \geq \theta \|\vb -\vA^\top \vx\|_* + \frac{1}{N} \sum_{i \in [N]} r^i,\label{joint:conic}\\
		& M^i (1-z^i) \geq t-r^i, \quad i \in [N],\label{joint:bigM1}\\
		& (-\vA\vxi_p^i- \va_p)^\top\vx+ (\vb^\top\vxi_p^i+d_p) + M^i z^i \geq t-r^i, \quad i \in [N]\ p\in[P].\label{joint:bigM2}
	\end{align}
\end{subequations}
where $M^1,\ldots,M^N$ are sufficiently large positive constants. Here, the term $\|\vb - \vA^\top \vx\|_*$ was in the denominator in~\eqref{eq:distance-linear} but is moved by multiplying $t,\vr$ in \eqref{eq:cc-distance-formulation} by $\|\vb - \vA^\top \vx\|_*$ and relabelling the variables accordingly. In fact,~\citet[Proposition 1]{chen2018data} focus on individual chance constraints, but their proof can be extended to provide formulation~\eqref{eq:joint} for joint chance constraints. 

On the other hand, when there exists some $\vx \in \cX\setminus \cX_{\DR}(\cS)$ such that $\vb = \vA^\top \vx$, the constraints \eqref{joint:vars}--\eqref{joint:bigM2} correspond to a \emph{relaxation} of $\cX_{\DR}(\cS)$ for both $\cS\in\{\cS^o,\cS^c\}$. In fact, if $\vx \in \cX\setminus \cX_{\DR}(\cS)$ satisfies $\vb = \vA^\top \vx$, then $\vx$ always satisfies~\eqref{joint:vars}--\eqref{joint:bigM2} together with $t=r^i=0$ and $z^i=1$ for all $i\in[N]$. We next describe the precise relationship between $\cX_{\DR}(\cS)$ for $\cS\in \{\cS^o,\cS^c\}$ and formulation \eqref{eq:joint}.
\begin{lemma}\label{lemma:joint-valid}
We have
\begin{subequations}\label{eq:valid}
\begin{align}
\left\{ \vx : \text{\eqref{joint:vars}--\eqref{joint:bigM2}} \right\} &= \cX_{\DR}(\cS^o) \cup \left\{ \vx \in \cX :~ \begin{aligned}
&\vb = \vA^\top \vx,\\
&d_p \leq \va_p^\top \vx\ \ \text{for some} \ p\in[P]
\end{aligned} \right\}\label{eq:valid-open}\\
&= \cX_{\DR}(\cS^c) \cup \left\{ \vx \in \cX :~ \begin{aligned}
&\vb = \vA^\top \vx,\\
&d_p < \va_p^\top \vx\ \ \text{for some} \ p\in[P]
\end{aligned} \right\}\label{eq:valid-closed}.
\end{align}
\end{subequations}
\end{lemma}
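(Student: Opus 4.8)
The plan is to analyze the feasible set $\{\vx : \text{\eqref{joint:vars}--\eqref{joint:bigM2}}\}$ by splitting on whether $\vb = \vA^\top\vx$ or $\vb \neq \vA^\top\vx$, since the behavior of the distance function (and hence of the big-$M$ constraints) changes qualitatively across this dichotomy. For $\vx$ with $\vb \neq \vA^\top\vx$, I would invoke the fact (already recorded in the excerpt, via \citet[Theorem 2]{xie2018distributionally} together with \eqref{eq:distance-linear}) that the constraints \eqref{joint:conic}--\eqref{joint:bigM2} exactly model the condition $\vx \in \cX_{\DR}(\cS)$ for either choice of $\cS$; note the distance formula \eqref{eq:distance-linear} is insensitive to open-vs-closed by \eqref{eq:dist-invariant}, so on this part of the domain both right-hand sides of \eqref{eq:valid} agree and agree with the left-hand side. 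The real content is therefore the case $\vb = \vA^\top\vx$.

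For $\vx \in \cX$ with $\vb = \vA^\top\vx$, I would first observe (as already noted just before the lemma) that such an $\vx$ always satisfies \eqref{joint:vars}--\eqref{joint:bigM2} by taking $t = r^i = 0$, $z^i = 1$ for all $i$: then \eqref{joint:conic} reads $0 \geq 0$, \eqref{joint:bigM1} reads $M^i \geq 0$, and \eqref{joint:bigM2} reads $(-\vA\vxi_p^i - \va_p)^\top\vx + (\vb^\top\vxi_p^i + d_p) + M^i \geq 0$, which holds for $M^i$ large. Hence every such $\vx$ lies in the left-hand set. So I must determine which of these $\vx$ also lie in $\cX_{\DR}(\cS^o)$, respectively $\cX_{\DR}(\cS^c)$. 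When $\vb = \vA^\top\vx$, the safety-set inequalities \eqref{eq:safety} degenerate: the $\vxi_p$-dependence vanishes and the $p$-th inequality becomes the deterministic condition $d_p - \va_p^\top\vx > 0$ (open) or $d_p - \va_p^\top\vx \geq 0$ (closed). Thus $\cS^o(\vx)$ is either all of $\bbR^K$ (if $d_p > \va_p^\top\vx$ for all $p$) or empty (if $d_p \leq \va_p^\top\vx$ for some $p$); similarly $\cS^c(\vx) = \bbR^K$ iff $d_p \geq \va_p^\top\vx$ for all $p$, and $\cS^c(\vx) = \emptyset$ iff $d_p < \va_p^\top\vx$ for some $p$. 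Consequently $\sup_{\bbP\in\cF_N(\theta)}\bbP[\vxi\notin\cS(\vx)]$ is $0$ when the safety set is all of $\bbR^K$ and $1$ when it is empty; since $\epsilon \in (0,1)$, the DR chance constraint holds precisely when the safety set is the whole space. This gives: $\vx \in \cX_{\DR}(\cS^o)$ iff $d_p > \va_p^\top\vx$ for all $p$, i.e. iff it is \emph{not} the case that $d_p \leq \va_p^\top\vx$ for some $p$; and $\vx \in \cX_{\DR}(\cS^c)$ iff it is \emph{not} the case that $d_p < \va_p^\top\vx$ for some $p$.

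Assembling the two cases proves both equalities: on $\{\vb\neq\vA^\top\vx\}$ all three sets coincide (with $\cX_{\DR}(\cS^o)$ and $\cX_{\DR}(\cS^c)$ themselves coinciding there), while on $\{\vb=\vA^\top\vx\}$ the left-hand set is the entire slice $\{\vx\in\cX:\vb=\vA^\top\vx\}$, which decomposes as the $\cX_{\DR}(\cS^o)$-part (where $d_p>\va_p^\top\vx$ for all $p$) together with its complement within the slice (where $d_p\leq\va_p^\top\vx$ for some $p$) — matching \eqref{eq:valid-open} — and analogously with strict inequality for \eqref{eq:valid-closed}. I would be slightly careful about one edge point, namely when $\vb=\vA^\top\vx$ and $d_p = \va_p^\top\vx$ for some $p$ with $d_{p'} > \va_{p'}^\top\vx$ for the rest: here $\cS^o(\vx) = \emptyset$ so $\vx \notin \cX_{\DR}(\cS^o)$, yet $\cS^c(\vx) = \bbR^K$ so $\vx \in \cX_{\DR}(\cS^c)$ — this is exactly the discrepancy between the "$\leq$" in \eqref{eq:valid-open} and the "$<$" in \eqref{eq:valid-closed}, and checking that the two stated decompositions handle this point consistently is the one spot that needs attention. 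The main obstacle, such as it is, is not difficulty but bookkeeping: correctly tracking the open/closed distinction through the degenerate slice and making sure the union decompositions are literally set-equalities (not just "up to boundary") at that edge.
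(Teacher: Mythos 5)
Your proposal is correct and follows essentially the same route as the paper: split on whether $\vb=\vA^\top\vx$, invoke the cited MIP-equivalence results of \citet{chen2018data} and \citet{xie2018distributionally} on the non-degenerate slice, and on the degenerate slice observe that $(t,\vr,\vz)=(0,\bm{0},\bm{1})$ is always feasible while $\cS^o(\vx)$ and $\cS^c(\vx)$ collapse to either $\bbR^K$ or $\emptyset$, forcing the worst-case probability to $0$ or $1$. (One immaterial slip: with $z^i=1$, \eqref{joint:bigM1} reads $0\geq 0$ rather than $M^i\geq 0$.)
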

\begin{proof}
Let $\vx \in \cX$ such that $\vb\neq \vA^\top\vx$. Then, by~\citet[Proposition 1]{chen2018data} and \citet[Proposition 1]{xie2018distributionally}, we deduce that $\vx \in \left\{ \vx : \text{\eqref{joint:vars}--\eqref{joint:bigM2}} \right\}$ if and only if $\vx\in \cX_{\DR}(\cS^o)$ and $\vx\in \cX_{\DR}(\cS^c)$.

Now take $\vx \in \cX$ such that $\vb = \vA^\top \vx$. We have already argued that $\vx$ together with $t=r^i=0$ and $z^i=1$ for all $i\in[N]$ satisfies~\eqref{joint:vars}--\eqref{joint:bigM2}. Therefore, to prove that~\eqref{eq:valid-open} and~\eqref{eq:valid-closed} hold, we need to characterize when $\vx$ is contained in $\cX_{\DR}(\cS^o)$ and $\cX_{\DR}(\cS^c)$. 

If $d_p -\va_p^\top \vx > 0$ for all $p\in[P]$, then $\cS^o(\vx)=\bbR^K$ and thus the worst-case probability $\sup_{\bbP \in \cF_N(\theta)} \bbP[\vxi \not\in \cS^o(\vx)]$ is $0$, in which case, $\vx\in \cX_{\DR}(\cS^o)$. On the other hand, if $d_p -\va_p^\top \vx \leq 0$ for some $p\in[P]$, then $\cS^o(\vx)$ is empty, which means that $\sup_{\bbP \in \cF_N(\theta)} \bbP[\vxi \not\in \cS^o(\vx)]=1$. In this case, $\vx\not\in \cX_{\DR}(\cS^o)$. Therefore, the equality in~\eqref{eq:valid-open} holds.

If $d_p -\va_p^\top \vx \geq 0$ for all $p\in[P]$, then $\cS^c(\vx) = \bbR^K$, so as before, $\vx\in \cX_{\DR}(\cS^c)$. If $d_p -\va_p^\top \vx < 0$ for some $p\in[P]$, then $\cS^c(\vx) = \emptyset$ and thus we can similarly argue that $\vx\not\in \cX_{\DR}(\cS^c)$. Hence, the equality in~\eqref{eq:valid-closed} holds, as required.
\ifx\flagJournal\true \qed \fi
\end{proof}

\begin{remark}\label{rem:distance}
\cref{lemma:joint-valid} indicates that the sets $\left\{ \vx : \text{\eqref{joint:vars}--\eqref{joint:bigM2}} \right\}\setminus \cX_{\DR}(\cS^o)$ and $\left\{ \vx : \text{\eqref{joint:vars}--\eqref{joint:bigM2}} \right\}\setminus \cX_{\DR}(\cS^c)$ may potentially be non-empty, in which case, the optimal solution returned by solving~\eqref{eq:joint} may fall into these extraneous sets. \citet[Remark 2]{chen2018data} suggest how to handle this case separately by solving a series of MIPs with strict inequalities. That is, if the optimal solution $\vx^*$ is in the set $\left\{ \vx : \text{\eqref{joint:vars}--\eqref{joint:bigM2}} \right\}\setminus \cX_{\DR}(\cS^o)$, one can solve $2E+1$ variants of~\eqref{eq:joint}, where $E$ is the number of rows in the system $\vb=\vA^\top\vx$, that include exactly one of $2E$ strict inequalities $b_e<(\vA)_e^\top \vx$, $b_e>(\vA)_e^\top \vx$ for $e\in [E]$ and one system of strict inequalities $d_p>\va_p^\top\vx$ for $p\in [P]$. The case when $\vx^*\in \left\{ \vx : \text{\eqref{joint:vars}--\eqref{joint:bigM2}} \right\}\setminus \cX_{\DR}(\cS^c)$ can be similarly dealt with.
\ifx\flagJournal\true \epr \fi
\end{remark}

\begin{remark}\label{rem:joint-valid-RHS-compare}
Note that \cref{lemma:joint-valid} states that the formulation of \eqref{eq:dr-ccp} in equation~\eqref{eq:joint} from previous literature is not exact: there are extraneous parts when $x \in \cX$ is a solution to $A^\top x = b$. This is an artifact of the left-hand side uncertainty. Indeed, in the case of right-hand side uncertainty only, i.e., when $A=0$, $A^\top x = b$ is only solvable when $b=0$ also, but this is the trivial case when the random part of the constraint has been zeroed out, hence we need not consider it. Therefore, in contrast to the RHS uncertainty case discussed in our previous paper \cite{rhs2020}, more effort and care are required when relating the  formulations  for CCP and \eqref{eq:dr-ccp} in the left-hand side case. In particular we need to understand precisely how the extraneous sections are affected, and this is the focus of  \cref{sec:nominal}.
\ifx\flagJournal\true \epr \fi
\end{remark}

\subsection*{Remarks on safety sets}
The most notable structural assumption of~\eqref{eq:safety} is that all $P$ inequalities share the same coefficient matrix $\vA$ as opposed to a more general form that  allow different $\vA$ and $\vb$ matrices across inequalities as the following:
\begin{equation}\label{eq:safety-general}
\cS(\vx) := \left\{ \vxi :~ (\vb_p-\vA_p^\top \vx)^\top \vxi_p + d_p - \va_p^\top \vx \geq 0, \ p \in [P] \right\}. 
\end{equation}
In the RHS uncertainty case, it is possible to have different $\vb_1,\ldots,\vb_p$ instead of the same $\vb$ but $A_p=\bm{0}$ for $p\in[P]$, for which~\citet[Proposition~2]{chen2018data} and~\citet[Corollary~3]{xie2018distributionally} provide almost identical MIP reformulations of $\cX_{\DR}(\cS)$. On the other hand, to the best of our knowledge,  there is no tractable reformulation proposed in the literature for non-identical coefficient matrices $\vA_p, p\in[P]$ in the LHS uncertainty case. 
\begin{remark}\label{distinct-matrices}
The derivation of formulation~\eqref{eq:joint} does not immediately generalize to the case when
$\cS(\vx)$ is given by~\eqref{eq:safety-general}. Although the distance function in~\eqref{eq:distance-linear} can be simply modified with $\vA_p$ and $\vb_p$ for $p\in[P]$, the step of replacing $t\|\vb_p - \vA_p^\top \vx\|_*$ and $r^i\|\vb_p - \vA_p^\top \vx\|_*$ by $t$ and $r^i$ does not go through as before.
\ifx\flagJournal\true \epr \fi
\end{remark}

The form of~\eqref{eq:safety} dictates that we have a fixed matrix $\vA$ for all constraints $p \in [P]$. Despite this restrictive structural form, we next show how the corresponding results can be applied to  more general safety sets of the form~\eqref{eq:safety-general}. 

\begin{remark}\label{rem:same-A}
Given non-identical coefficient matrices $\vA_p$ and vectors $\vb_p$, define the following coefficient matrix and the vector in a lifted space 
\[ \tilde{\vA} := \begin{bmatrix}
\vA_1 & \cdots & \vA_P
\end{bmatrix}, \quad \tilde{\vb} := \begin{bmatrix}
\vb_1^\top & \cdots &\vb_P^\top
\end{bmatrix}^\top. \] 
We also define new random variables $\tilde{\vxi}_p := (\bm{0},\ldots,\bm{0},\vxi_p,\bm{0},\ldots,\bm{0})$ and $\tilde{\vxi} := (\tilde{\vxi}_1,\ldots,\tilde{\vxi}_P)$, i.e., $\tilde{\vxi}_p$ lives in the same space as the original $\vxi = (\vxi_1,\ldots,\vxi_P)$, but all components are set to the zero vector except for $\vxi_p$. 
	Then, letting $\Proj_p$ be the projection operation $\vxi \mapsto \vxi_p$, we can equivalently write $\cS(\vx)$ as
	\[
	\cS(\vx) = \left\{ \vxi = (\vxi_1,\ldots,\vxi_P) :~ \begin{aligned}
	&\exists\, \tilde{\vxi} = (\tilde{\vxi}_1,\ldots,\tilde{\vxi}_P) \text{ s.t. } \vxi_p = \Proj_p(\tilde{\vxi}_p), \ p \in [P],\\
	&\Proj_q(\tilde{\vxi}_p) = \bm{0},\ p \neq q, \ p,q \in [P],\\
	&(\tilde{\vb}-\tilde{\vA}^\top \vx)^\top \tilde{\vxi}_p + d_p - \va_p^\top \vx \geq 0, \ p \in [P]
	\end{aligned} \right\} .
	\]
	The following is an approximation of $\cS(\vx)$ obtained after removing the structural assumption on $\tilde{\vxi}_p$ for $p\in[P]$ by dropping the first two projection constraints in $\cS(\vx)$.
	\[
	\tilde{\cS}(\vx) = \left\{ \tilde{\vxi} = (\tilde{\vxi}_1,\ldots,\tilde{\vxi}_P) :~ (\tilde{\vb}-\tilde{\vA}^\top \vx)^\top \tilde{\vxi}_p + d_p - \va_p^\top \vx \geq 0, \ p \in [P] \right\}.
	\]
	Observe that $\tilde{\cS}(\vx)$ is similar to $\cS(\vx)$, except that it lives in the space of the lifted variables $\tilde{\vxi}$. Importantly, it is of the same form as \eqref{eq:safety}. Note that the ambiguity set $\cF_N(\theta)$ must now consist of distributions over the lifted random variable $\tilde{\vxi}$, rather than $\vxi$. However, since in $\tilde{\cS}(\vx)$  we do not impose that $\Proj_q(\tilde{\vxi}_p) = \bm{0}$ for $p \neq q$ on the support of this random variable $\tilde{\vxi}$, this ambiguity set will be larger than what we originally wish to consider. Therefore, using this lifting given in $\tilde{\cS}(\vx)$ results in a more conservative solution  compared to the optimal solution to~\eqref{eq:dr-ccp}. 
	We  use this technique in the  resource planning application of \cref{sec:resource-planning}. 
	\ifx\flagJournal\true \epr \fi
\end{remark}

\section{Connection with the nominal chance constraint}\label{sec:nominal}

There is a direct relation between~\eqref{eq:dr-ccp} and the traditional sample average approximation formulation:
\begin{equation}\label{eq:saa-ccp}
\min_{\vx} \left\{\vc^\top \vx:~  \vx \in \mathcal{X}, ~ \frac{1}{N} \sum_{i \in [N]} \bm{1}(\vxi_i \not\in \cS(\vx)) \leq \epsilon \right\}. \tag{SAA}
\end{equation}
We formalize this next.  
\begin{remark}\label{rem:SAAconnection} 
When the radius $\theta$ of the Wasserstein ambiguity set $\cF_N(\theta)$ is 0, only the empirical distribution $\bbP_N$ belongs in the ambiguity set, in which case,~\eqref{eq:dr-ccp} coincides with~\eqref{eq:saa-ccp}. In general, \eqref{eq:saa-ccp} is a relaxation of \eqref{eq:dr-ccp} since we have $\cF_N(0)\subseteq \cF_N(\theta)$ for any $\theta\geq0$, i.e.,
\begin{equation}\label{eq:relaxation}
\cX_{\DR}(\cS) \subseteq \cX_{\SAA}(\cS),
\end{equation}
where $ \cX_{\SAA}(\cS)$ denotes the feasible region of~\eqref{eq:saa-ccp}.
Hence, \eqref{eq:saa-ccp} provides a lower bound (for minimization) on the optimum value of~\eqref{eq:dr-ccp}.
\ifx\flagJournal\true \epr \fi
\end{remark}

In the case of RHS uncertainty, this connection between \cref{eq:saa-ccp} and \cref{eq:dr-ccp} has been first observed and explored in our previous work~\citep{rhs2020}. It turns out that this relation is instrumental in improving the MIP formulation of~\eqref{eq:dr-ccp} with LHS uncertainty given in~\eqref{eq:joint} as well. We discuss this in this section. Moreover, this connection
allows us to reduce the extraneous set  
in the feasible region of~\eqref{eq:joint} discussed in \cref{rem:distance}
for the open safety set to $\{ \vx \in \cX : \vb = \vA^\top \vx, \ d_p = \va_p^\top \vx\ \ \text{for some}\ p\in[P]\}$, and remove it completely for the closed safety set.

The relation between~\eqref{eq:dr-ccp} and~\eqref{eq:saa-ccp} described in \cref{rem:SAAconnection} immediately gives rise to an improved formulation. In fact, it turns out that there is a more \emph{direct} correspondence between the MIP reformulation \eqref{eq:saa-reformulation} of~\eqref{eq:saa-ccp} below, often referred to as the \emph{big-$M$ formulation}, and the MIP reformulation~\eqref{eq:joint} of~\eqref{eq:dr-ccp}. As an immediate consequence of \cref{rem:SAAconnection}, we can strengthen the MIP reformulation of~\eqref{eq:dr-ccp}, and further apply existing tools that were developed originally for~\eqref{eq:saa-ccp}. We will elaborate this further in the remainder of this section.  

Suppose that the safety set is given by $\cS(\vx) = \left\{ \vxi :~ s(\vx,\vxi) \geq 0\right\}$ for some continuous function $s(\cdot)$. In this case, \citet{luedtke2010integer,ruspp:02} show that~\eqref{eq:saa-ccp} can be reformulated as the following MIP, known as the big-$M$ formulation:
\begin{subequations}\label{eq:saa-reformulation}
	\begin{align}
	\min\limits_{\vu, \vr, t, \vx} \quad & \vc^\top \vx\label{saa:obj}\\
	\text{s.t.}\quad & \vu \in \{0,1\}^N,\ \vx \in \mathcal{X},\label{saa:vars}\\
	& \sum_{i\in [N]} u^i \leq \lfloor\epsilon N\rfloor, \label{saa:knapsack}\\
	& s(\vx,\vxi^i) + M^i u^i \geq 0, \quad i \in [N],\label{saa:bigM}
	\end{align}
\end{subequations}
where $M^1,\ldots,M^N$ are sufficiently large constants and $u^i,  i \in [N]$  is an indicator variable that is equal to one if $s(\vx,\vxi^i)<0$ and hence scenario $i$ is unsafe. Constraints~\eqref{saa:knapsack} and~\eqref{saa:bigM} are often referred to as the {\it knapsack (or cardinality) constraint} and the {\it big-$M$ constraints}, respectively. Thus, formulation~\eqref{eq:saa-reformulation} provides a relaxation of~\eqref{eq:dr-ccp} when the safety set $\cS(\vx)$ is given by $\cS(\vx) = \left\{ \vxi :~ s(\vx,\vxi) > 0\right\}$. Further strengthenings of the MIP formulation~\eqref{eq:saa-reformulation} via other classes of valid inequalities have been suggested in~\cite{abdi2016mixing-knapsack,Kilinc-Karzan2019joint-sumod,kucukyavuz2012mixing,LKL16,liu2018intersection,luedtke2010integer,luedtke2014branch-and-cut,xie2018quantile,zhao2017joint-knapsack}.

Note that for the closed safety set \eqref{eq:safety-closed}, we  define $s(\cdot)$ as
\begin{equation}\label{eq:saa-safety}
s(\vx,\vxi^i) := \min\limits_{p\in[P]}\left\{(-\vA\vxi_p^i- \va_p)^\top\vx+ (\vb^\top\vxi_p^i+d_p)\right\},
\end{equation}
and thus $s(\vx,\vxi^i)$ in~\eqref{saa:bigM} can be replaced with~\eqref{eq:saa-safety}. Another way of representing~\eqref{saa:bigM} in this case is to expand the minimum term in~\eqref{eq:saa-safety}, thereby obtaining
\begin{equation}\label{eq:saa-safety'}
s_p(\vx,\vxi^i) + M^i z^i \geq 0,\quad \text{where}\quad  s_p(\vx,\vxi^i) := (-\vA\vxi_p^i- \va_p)^\top\vx+ (\vb^\top\vxi_p^i+d_p).
\end{equation}
for $i\in[N]$, $p\in[P]$. As discussed in \cref{rem:SAAconnection},~\eqref{eq:saa-ccp} is a relaxation of~\eqref{eq:dr-ccp}, so inequalities of the form~\eqref{saa:knapsack}--\eqref{saa:bigM} can be added to the MIP formulation \eqref{eq:joint} of~\eqref{eq:dr-ccp}. Including these inequalities in a na\"{i}ve way would introduce a \emph{new} binary variable for each sample $\vxi^i$ and result in two different sets of $N$ binary variables in the formulation. 
Our key observation is that these inequalities can be added \emph{without} introducing additional binary variables, but instead we show in \cref{thm:knapsack-valid-joint} that~\eqref{saa:knapsack}--\eqref{saa:bigM} can simply be added to \eqref{eq:joint} with \emph{$\vz$ simply replacing $\vu$} and \emph{the same big-$M$ constants} without compromising the validity of formulation~\eqref{eq:joint}.
This provides us with the possibility of applying and adapting techniques developed to improve the formulation (and thereby computational tractability) of \eqref{eq:saa-reformulation} to \eqref{eq:joint}. The same observation for strengthening the~\eqref{eq:dr-ccp} formulation in the case of the RHS uncertainty was made in our recent paper~\cite[Theorem 1]{rhs2020}.

Our main result concerns the following MIP formulation for the joint chance constraint of \eqref{eq:dr-ccp} where $\cS(\vx)$ can be either the open or the closed safety set from~\eqref{eq:safety}:
\begin{subequations}\label{eq:joint-knapsack}
	\begin{align}
	\min\limits_{\vz, \vr, t, \vx} \quad & \vc^\top \vx\label{joint-k:obj}\\
	\text{s.t.}\quad & (\vz,\vr,t,\vx)\ \text{satisfies} \ \eqref{joint:vars}\text{--}\eqref{joint:bigM2},\label{joint-k:basic}\\
	& \sum_{i\in [N]} z^i \leq \lfloor \epsilon N \rfloor,\label{joint-k:knapsack}\\
	& (-\vA\vxi_p^i- \va_p)^\top\vx+ (\vb^\top\vxi_p^i+d_p) + M^i z^i \geq 0, \quad i \in [N],\ p\in[P], \label{joint-k:bigM3}
	\end{align}
\end{subequations}
where $M^i, i\in[N]$ are sufficiently large positive constants. 
The only difference between formulation~\eqref{eq:joint-knapsack} and formulation~\eqref{eq:joint} is the additional constraints~\eqref{joint-k:knapsack} and~\eqref{joint-k:bigM3}. 
\cref{thm:knapsack-valid-joint} will show that while~\eqref{joint-k:knapsack} and~\eqref{joint-k:bigM3} do cut off points in the feasible region of~\eqref{eq:joint}, they are nevertheless valid for both $\cX_{\DR}(\cS^o)$ and $\cX_{\DR}(\cS^c)$. In fact, \cref{eq:joint-knapsack} is an exact reformulation of~\eqref{eq:dr-ccp} for the closed safety set $\cS^c$ given in~\eqref{eq:safety}, and gives a tighter relaxation than \eqref{eq:joint} for the open safety set $\cS^o$.

\begin{theorem}\label{thm:knapsack-valid-joint}
The feasible region of~\eqref{eq:joint-knapsack} is characterized as follows:
\begin{align}
&\left\{ \vx \in \cX : \text{\eqref{joint-k:basic}--\eqref{joint-k:bigM3}} \right\} \label{eq:prop1-1-l}\\
&\qquad= \left\{ \vx \in \cX : \text{\eqref{joint:vars}--\eqref{joint:bigM2}} \right\} \setminus \left\{ \vx \in \cX : \begin{aligned}
\ &\vb = \vA^\top \vx,\\
&d_p < \va_p^\top \vx\ \ \text{for some}\ p\in[P]
\end{aligned} \right\}\label{eq:prop1-1}\\
&\qquad= \cX_{\DR}(\cS^o) \cup \left\{ \vx \in \cX : \begin{aligned}
\ &\vb = \vA^\top \vx,\\
&d_p =\va_p^\top \vx\ \ \text{for some}\ p\in[P]
\end{aligned} \right\}\\
&\qquad= \cX_{\DR}(\cS^c).
\end{align}
\end{theorem}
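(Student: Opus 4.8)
The plan is to prove the four‑way equality by a single case distinction on whether $\vb=\vA^\top\vx$. Since the binary vector $\vz$ in~\eqref{eq:joint-knapsack} is literally the same as in~\eqref{eq:joint} (we add constraints but no new variables), the projection onto $\vx$ of the feasible set of~\eqref{eq:joint-knapsack} is contained in $\{\vx\in\cX:\text{\eqref{joint:vars}--\eqref{joint:bigM2}}\}$, so by \cref{lemma:joint-valid} it suffices to identify exactly which points of the latter are cut off by the two new constraints~\eqref{joint-k:knapsack} and~\eqref{joint-k:bigM3}; the whole chain of equalities should then fall out of \cref{lemma:joint-valid}.

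\emph{The slice $\vb=\vA^\top\vx$ (easy).} Here the left‑hand sides of~\eqref{joint:bigM2} and~\eqref{joint-k:bigM3} degenerate to $d_p-\va_p^\top\vx$, with no dependence on $\vxi^i$, and $\{\vx\in\cX:\text{\eqref{joint:vars}--\eqref{joint:bigM2}}\}$ contains all of $\{\vb=\vA^\top\vx\}\cap\cX$ (take $t=r^i=0,\ z^i=1$). Since $\epsilon<1$ forces $\lfloor\epsilon N\rfloor<N$, constraint~\eqref{joint-k:knapsack} forces $z^{i_0}=0$ for some $i_0$, and then~\eqref{joint-k:bigM3} at $i_0$ reads $d_p-\va_p^\top\vx\ge0$ for all $p\in[P]$, i.e.\ $\cS^c(\vx)=\bbR^K$; conversely, if $d_p\ge\va_p^\top\vx$ for all $p$ then $(\vz,\vr,t)=(\bm{0},\bm{0},0)$ is feasible for~\eqref{eq:joint-knapsack}, using $\|\vb-\vA^\top\vx\|_*=0$ in~\eqref{joint:conic}. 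Hence on this slice~\eqref{eq:prop1-1-l} is exactly $\{\vx\in\cX:\vb=\vA^\top\vx,\ d_p\ge\va_p^\top\vx,\ p\in[P]\}$ --- that is, $\{\vx\in\cX:\text{\eqref{joint:vars}--\eqref{joint:bigM2}}\}$ restricted to this slice with $\{\vb=\vA^\top\vx,\ d_p<\va_p^\top\vx\text{ for some }p\}$ removed. Arguing as in the proof of \cref{lemma:joint-valid}, this same set also equals $\cX_{\DR}(\cS^c)\cap\{\vb=\vA^\top\vx\}$, while $\cX_{\DR}(\cS^o)\cap\{\vb=\vA^\top\vx\}=\{\vb=\vA^\top\vx,\ d_p>\va_p^\top\vx,\ p\in[P]\}$.

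\emph{The slice $\vb\ne\vA^\top\vx$ (the crux).} I claim~\eqref{joint-k:knapsack}--\eqref{joint-k:bigM3} cut off nothing here, so~\eqref{eq:prop1-1-l} coincides on this slice with $\{\vx\in\cX:\text{\eqref{joint:vars}--\eqref{joint:bigM2}}\}$, which by \cref{lemma:joint-valid} equals $\cX_{\DR}(\cS^c)$ (coinciding on this slice with $\cX_{\DR}(\cS^o)$, by~\eqref{eq:dist-invariant}). Inclusion $\subseteq$ is immediate. For $\supseteq$, take $\vx\in\cX_{\DR}(\cS^c)$; by~\eqref{eq:cc-distance-formulation}, after shrinking $\vr$ to its minimal feasible value, there is $t_0\ge0$ with $\epsilon t_0\ge\theta+\tfrac{1}{N}\sum_i\max\{0,\,t_0-\dist(\vxi^i,\cS^c(\vx))\}$. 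Writing $s(\vx,\vxi^i)$ as in~\eqref{eq:saa-safety}, $s_p(\vx,\vxi^i)$ for its $p$‑th term, and using~\eqref{eq:distance-linear} in the scaled form $\|\vb-\vA^\top\vx\|_*\dist(\vxi^i,\cS^c(\vx))=\max\{0,s(\vx,\vxi^i)\}$, I set
\[ t:=\|\vb-\vA^\top\vx\|_*\,t_0,\qquad r^i:=\max\bigl\{0,\; t-\max\{0,\,s(\vx,\vxi^i)\}\bigr\},\qquad z^i:=\bm{1}\bigl(s(\vx,\vxi^i)<0\bigr). \]
A short case analysis on the sign of $s(\vx,\vxi^i)$ then verifies every constraint: \eqref{joint:conic} is $\|\vb-\vA^\top\vx\|_*$ times the displayed scalar inequality; if $z^i=1$ then $r^i=t$ and each surviving inequality has the form ``$M^i\ge$ (a quantity bounded over $\cX$)''; if $z^i=0$ then $s(\vx,\vxi^i)\ge0$, whence $s_p(\vx,\vxi^i)\ge s(\vx,\vxi^i)\ge0$ while $t-r^i=\min\{t,s(\vx,\vxi^i)\}$ is both $\le s_p(\vx,\vxi^i)$ and $\le t$, giving~\eqref{joint:bigM1}, \eqref{joint:bigM2}, \eqref{joint-k:bigM3}; and~\eqref{joint-k:knapsack} holds because $\sum_i z^i$ is the number of samples with $\vxi^i\notin\cS^c(\vx)$, which is $\le\lfloor\epsilon N\rfloor$ since $\cX_{\DR}(\cS^c)\subseteq\cX_{\SAA}(\cS^c)$ by \cref{rem:SAAconnection}.

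\emph{Assembly and the main obstacle.} Combining the two slices,~\eqref{eq:prop1-1-l} is exactly $\{\vx\in\cX:\text{\eqref{joint:vars}--\eqref{joint:bigM2}}\}$ with $\{\vb=\vA^\top\vx,\ d_p<\va_p^\top\vx\text{ for some }p\}$ deleted, which is~\eqref{eq:prop1-1}; substituting the closed‑set form~\eqref{eq:valid-closed} of \cref{lemma:joint-valid} and noting $\cX_{\DR}(\cS^c)$ is disjoint from the deleted set gives the last line, and substituting the open‑set form~\eqref{eq:valid-open} (using disjointness from $\cX_{\DR}(\cS^o)$ and simplifying the residual extraneous set) gives the third line. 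The one genuine difficulty is the $\vb\ne\vA^\top\vx$ verification: passing to the \emph{minimal} $\vr$ tightens~\eqref{joint:bigM1}--\eqref{joint:bigM2}, so these cannot be checked in isolation from the choice $z^i=\bm{1}(s(\vx,\vxi^i)<0)$ --- it is precisely the distance formula~\eqref{eq:distance-linear} that makes the two compatible --- and one must also confirm that the ``sufficiently large $M^i$'' needed here is no larger than what~\eqref{eq:joint} already demands, which holds since $t$ and each $s_p(\vx,\vxi^i)$ stay bounded over $\cX$.
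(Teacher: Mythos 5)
Your proof is correct, and its skeleton matches the paper's: both arguments reduce everything to \cref{lemma:joint-valid} and then show that \eqref{joint-k:knapsack}--\eqref{joint-k:bigM3} delete exactly the slice $\{\vb=\vA^\top\vx,\ d_p<\va_p^\top\vx \text{ for some } p\}$ while cutting nothing from $\cX_{\DR}(\cS^c)$. The divergence is in the key step on the slice $\vb\neq\vA^\top\vx$. The paper starts from an arbitrary certificate $(\vz,\vr,t)$ feasible for \eqref{joint:vars}--\eqref{joint:bigM2}, keeps $\vr$ and $t$ fixed, redefines only $\bar z^i=\bm{1}(s(\vx,\vxi^i)<0)$, and verifies the knapsack constraint by showing $r^i/t\geq\bar z^i$ (using $t>0$ from \eqref{joint:conic} when $\vb\neq\vA^\top\vx$) and summing against the conic constraint. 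You instead rebuild the entire certificate from the abstract distance formulation \eqref{eq:cc-distance-formulation} together with the explicit formula \eqref{eq:distance-linear}, and you dispose of the knapsack constraint via the SAA inclusion $\cX_{\DR}(\cS^c)\subseteq\cX_{\SAA}(\cS^c)$ of \cref{rem:SAAconnection} (which is not circular, being derived independently from $\cF_N(0)\subseteq\cF_N(\theta)$). Your route is more self-contained --- it re-derives the part of \citet{xie2018distributionally}'s equivalence that the paper imports as a black box, and the identity $t-r^i=\min\{t,\max\{0,s(\vx,\vxi^i)\}\}$ makes the feasibility checks transparent --- while the paper's route is shorter because it never needs to touch $\vr$ or $t$. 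Two small points: your closing remark that one must confirm ``$t$ stays bounded over $\cX$'' is stronger than needed, since in your construction the only quantities that must be dominated by $M^i$ are $\min\{t,s(\vx,\vxi^i)\}\leq s(\vx,\vxi^i)\leq\max_p|s_p(\vx,\vxi^i)|$ and $-s_p(\vx,\vxi^i)$, both already covered by \eqref{eq:bigMvalue-joint} (or by \cref{rem:bigM} when $\cX$ is unbounded, exactly as the paper's own proof implicitly assumes); and your handling of the slice $\vb=\vA^\top\vx$ is actually slightly cleaner than the paper's, since \eqref{joint-k:bigM3} at a scenario with $z^{i_0}=0$ yields $d_p\geq\va_p^\top\vx$ directly without first forcing $t=r^i=0$ by contradiction.
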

\begin{proof}
We will prove  the equality in~\eqref{eq:prop1-1}. Then the rest will follow from~\eqref{eq:valid}. %

We first show that the set in~\eqref{eq:prop1-1-l} is contained in the set  in~\eqref{eq:prop1-1}. To this end, take a vector $\vx\in\cX$ satisfying~\eqref{joint-k:basic}--\eqref{joint-k:bigM3} with some $\vz,\vr,t$. Then $\vx,\vz,\vr,t$ automatically satisfy \eqref{joint:vars}--\eqref{joint:bigM2}, so it suffices to argue that $\vb \neq \vA^\top \vx$ or $d_p \geq \va_p^\top \vx$ for all $p\in[P]$. Suppose for a contradiction that $\vb = \vA^\top \vx$ and $d_p < \va_p^\top \vx$ for some $p\in[P]$. As $z^i\in\{0,1\}$, it follows from~\eqref{joint:bigM1} and~\eqref{joint:bigM2} that $r^i\geq t$ for all $i\in [N]$. Then we obtain $\epsilon\, t\geq \sum_{i\in[N]}r^i/N\geq t$ from~\eqref{joint:conic}. Since $\epsilon<1$, we must have $t=r^i=0$ for all $i\in[N]$, and then  constraint~\eqref{joint:bigM2} becomes $d_p -\va_p^\top \vx + M^i z^i\geq 0$. This in turn implies that $z^i=1$ for all $i\in[N]$ because $d_p -\va_p^\top \vx<0$, and in particular, $\sum_{i\in[N]}z_i=N$. However, as $\epsilon<1$, $\vz$ violates~\eqref{joint-k:knapsack}, a contradiction. Therefore, $\vb \neq \vA^\top \vx$ or $d_p \geq \va_p^\top \vx$ for all $p\in[P]$, as required.

Next we show that the set in~\eqref{eq:prop1-1} is contained in the set in~\eqref{eq:prop1-1-l}. Let $\vx\in\cX$ satisfy~\eqref{joint:vars}--\eqref{joint:bigM2} with some $\vz,\vr,t$. It suffices to argue that if $\vb \neq \vA^\top \vx$ or $d _p\geq \va_p^\top \vx$ for all $p\in[P]$, then $\vx\in\cX$ satisfies~\eqref{joint-k:basic}--\eqref{joint-k:bigM3} with some $\bar\vz,\bar\vr,\bar t$ (not necessarily the same $\vz,\vr,t$). First, assume that $\vb \neq \vA^\top \vx$. We claim that $\vx,\bar\vz,\vr,t$ satisfy~\eqref{joint-k:basic}--\eqref{joint-k:bigM3} where $\bar\vz\in\{0,1\}^N$ is the vector satisfying $\bar z^i=1$ if and only if $(-\vA\vxi_p^i- \va_p)^\top\vx+ (\vb^\top\vxi_p^i+d_p)<0$ for all $i\in[N]$. Since $M^i$ is sufficiently large so that $(-\vA\vxi_p^i- \va_p)^\top\vx+ (\vb^\top\vxi_p^i+d_p)+M^i\geq0$, the constraints~\eqref{joint-k:bigM3} are satisfied with $\bar\vz$. Moreover, by the choice of $\bar\vz$, $\min\left\{(-\vA\vxi_p^i- \va_p)^\top\vx+ (\vb^\top\vxi_p^i+d_p)+M^i\bar z^i,~M^i(1-\bar z^i)\right\}$ is greater than or equal to $\min\left\{(-\vA\vxi_p^i- \va_p)^\top\vx+ (\vb^\top\vxi_p^i+d_p)+M^iz^i,~M^i(1-z^i)\right\}$ for any $z^i\in\{0,1\}$. That means $\vx,\bar\vz$ satisfy~\eqref{joint:bigM1} and~\eqref{joint:bigM2} because they are already satisfied by $\vx,\vz$. Hence, it remains to argue that $\bar\vz$ satisfies~\eqref{joint-k:knapsack}. Since $\vb \neq \vA^\top \vx$, we have $\|\vb - \vA^\top \vx \|_*>0$ and thus $t>0$ by~\eqref{joint:conic}. We claim that ${r^i}/{t}\geq \bar z^i$ for all $i\in[N]$. When $\bar z^i=1$, ${r^i}/{t}\geq 1=\bar z^i$ holds by~\eqref{joint:bigM1}. We also know that ${r^i}/{t}\geq 0$ as $r^i\geq0$, and in particular, ${r^i}/{t}\geq\bar z^i$ holds when $\bar z^i=0$. As~\eqref{joint:conic} states that $\epsilon N\geq \sum_{i\in[N]}{r^i}/{t}$, it follows that $\epsilon N\geq\sum_{i\in[N]}\bar z^i$. Since $\sum_{i\in[N]}\bar z^i$ takes an integer value, $\bar\vz$ indeed satisfies~\eqref{joint-k:knapsack}. Therefore, $\vx,\bar\vz,\vr,t$ satisfy~\eqref{joint-k:basic}--\eqref{joint-k:bigM3}. Thus, we may assume that $\vb = \vA^\top \vx$ and $d_p\geq\va_p^\top\vx$ for all $p\in[P]$. Then, it is clear that $\vx$ together with $\bar t=\bar r^i=\bar z^i=0$ for $i\in[N]$ satisfies~\eqref{joint-k:basic}--\eqref{joint-k:bigM3}, as required. 
\ifx\flagJournal\true \qed \fi
\end{proof}

\begin{remark}
By \cref{thm:knapsack-valid-joint},~\eqref{eq:joint-knapsack} is an exact reformulation of~\eqref{eq:dr-ccp} when the safety set $\cS(\vx)$ is closed. When $\cS(\vx)$ is open, if~\eqref{eq:joint-knapsack} returns an optimal solution $\vx$ such that $\vb \neq \vA^\top \vx$ or $d_p \neq\va_p^\top \vx$ for all $p\in[P]$, then $\vx$ is an optimal solution to~\eqref{eq:dr-ccp}. However, if~\eqref{eq:joint-knapsack} returns an optimal solution $\vx$ such that $\vb = \vA^\top \vx$ and $d_p =\va_p^\top \vx$ for some $p\in[P]$, then $\vx\not\in \cX_{\DR}(\cS^o)$. Nevertheless, we can deal with this case separately by solving linear programs with strict inequalities as in \citet[Remark 2]{chen2018data} (see also \cref{rem:distance}).
\ifx\flagJournal\true \epr \fi
\end{remark}

\begin{remark}\label{rem:bigM}
\citet[Theorem~2]{xie2018distributionally} shows that the following choice of $M^i$ for $i\in[N]$ is sufficient for the validity of formulations~\eqref{eq:joint} and~\eqref{eq:joint-knapsack}:
\begin{equation}\label{eq:bigMvalue-joint}
M^i=\max_{x\in\cX,p\in[P]}\left\{\left|(-\vA\vxi_p^i- \va_p)^\top\vx+ (\vb^\top\vxi_p^i+d_p)\right|\right\},\quad i\in[N].
\end{equation}
But, when the domain $\cX$ is not bounded, $M^i$ is not necessarily finite; our applications in \cref{sec:experiments} fall into this category. In such cases, instead of \cref{eq:bigMvalue-joint}, we can simply ensure that
\begin{equation}\label{eq:bigMvalue-joint-opt}
M^i \geq \max_{p\in[P]} \left\{\left|(-\vA\vxi_p^i- \va_p)^\top\vx+ (\vb^\top\vxi_p^i+d_p)\right|\right\},\quad i\in[N],
\end{equation}
for at least one optimal solution $\vx$ of \cref{eq:joint-knapsack}, which maintains the validity of the formulation. 
That said, in order to be able to use~\eqref{eq:bigMvalue-joint-opt}, we must understand the structure of the optimal solutions, which can be a nontrivial task on its own. In \cref{sec:experiments}, we will explain how to choose $M^i$ for $i\in[N]$ based on~\eqref{eq:bigMvalue-joint-opt} for the specific applications we consider.
\ifx\flagJournal\true \epr \fi
\end{remark}

\section{Quantile Strengthening}\label{sec:quantile}

Formulation~\eqref{eq:joint-knapsack} is already stronger than~\eqref{eq:joint}. Moreover, we can improve formulation~\eqref{eq:joint-knapsack} even further by exploiting the so-called \emph{mixing substructure} residing in~\crefrange{joint-k:knapsack}{joint-k:bigM3}. In the case of the nominal chance-constrained programs as in~\eqref{eq:saa-reformulation}, analyzing and exploiting the mixing substructure originating from the big-$M$ and the knapsack constraints~\crefrange{saa:knapsack}{saa:bigM} is already a common practice. In particular, the big-$M$ coefficients in front of the binary variables in~\eqref{saa:knapsack} can be significantly reduced based on the assumption that solutions satisfy the knapsack constraint~\eqref{saa:bigM}. We will explain this procedure in \cref{sec:mixing} in detail and refer to it as \emph{quantile strengthening}. \citet{luedtke2010integer} developed this quantile strengthening technique for solving nominal chance-constrained programs with random RHS, and~\citet{luedtke2014branch-and-cut} later applied it to CCPs with random LHS. We can reduce the big-$M$ coefficients in~\eqref{joint-k:bigM3} by applying the same method to~\crefrange{joint-k:knapsack}{joint-k:bigM3}. What is surprising is that the big-$M$ coefficients in~\eqref{joint:bigM2} can also be  reduced using the quantile information, thereby further strengthening formulation~\eqref{eq:joint-knapsack}. %

For distributionally robust chance constraints with random RHS, our previous work \citep{rhs2020} demonstrated how to adapt quantile strengthening to improve the big-$M$ coefficients in~\eqref{joint:bigM2} and provided strong numerical evidence that this coefficient strengthening step has an overwhelmingly positive impact in the overall computation time. In this section, we extend this framework to the~\eqref{eq:dr-ccp} with random LHS setting and discuss how the big-$M$ coefficients in~\eqref{joint:bigM2} can be reduced accordingly. See \cref{rem:mixing-RHS-comparison} for a discussion of the differences in our quantile strengthening procedure for the LHS uncertainty case against our previous paper~\citep{rhs2020}.

The main distinction in the random LHS case, compared to the RHS uncertainty case, is that the coefficients $(-\vA\vxi_p^i-\va_p)$ of the decision variables~$\vx$ in~\eqref{joint:bigM2} change over different scenarios, because $\vA\neq\bf{0}$. When $\vA=\bf{0}$,~\crefrange{joint-k:knapsack}{joint-k:bigM3} naturally give rise to a mixing set with a fixed linear function $(-\va_p)^\top\vx$ for each $p\in [P]$. In contrast, when $\vA\neq\bf{0}$, we construct a mixing set corresponding to $(-\vA\vxi_p^i-\va_p)^\top\vx$ for every pair of $i\in[N]$ and $p\in[P]$.  For this, we rely on an idea of~\citet{luedtke2014branch-and-cut} used for quantile strengthening to solve nominal CCPs with random LHS. 
The distinct feature of our framework is that we consider particular structures stemming from $(-\vA\vxi_p^i-\va_p)^\top\vx$ for $i\in[N]$ and $p\in[P]$ in~\eqref{joint:bigM2} for the sake of reducing the big-$M$ coefficients in~\eqref{joint:bigM2}.

In~\cref{sec:mixing} we describe the construction of the mixing inequalities  as in~\cite{luedtke2014branch-and-cut}, and in~\cref{sec:bigM_reduction} we describe our quantile strengthening procedure for \cref{eq:dr-ccp} with LHS uncertainty.

\subsection{Mixing inequalities}\label{sec:mixing}

Let us consider the following mixing substructure arising from the constraints~\crefrange{joint-k:knapsack}{joint-k:bigM3}:
\begin{equation}
Q:=\left\{ (\vx,\vz)\in \cX \times \{0,1\}^N : \begin{aligned}
\ &s(\vx,\vxi^i) + M^i z^i \geq 0, \quad i \in [N],\\
& \sum_{i\in [N]} z^i \leq \lfloor\epsilon N\rfloor
\end{aligned} \right\}.\label{eq:mixingset}
\end{equation}
We can set $s(\vx,\vxi) := (-\vA\vxi_p- \va_p)^\top\vx+ (\vb^\top\vxi_p+d_p)$ for a fixed $p\in[P]$ so that individual constraints are separately considered, or the set $Q$ can capture the joint constraints by taking  $s(\vx,\vxi):=\min_{p\in[P]}\left\{(-\vA\vxi_p- \va_p)^\top\vx+ (\vb^\top\vxi_p+d_p)\right\}$.%

We will utilize the following procedure to find inequalities of the form $\vmu^\top \vx + \vpi^\top \vz\geq\beta$ that are valid for the mixed-integer set $Q$ in~\eqref{eq:mixingset}. Given a fixed linear function $\vmu^\top\vx$ and a set $\bar{\cX} \supseteq \cX$, we solve the following single scenario subproblem for each scenario $i\in[N]$:
\begin{equation}
\bar h^i(\vmu):=\min\left\{\vmu^\top\vx:\ s(\vx,\vxi^i)\geq0, \ \vx\in \bar{\cX}\right\}.\label{eq:single-subproblem}
\end{equation}
Then, $\vmu^\top\vx\geq \bar h^i(\vmu)$ holds for $(\vx,\vz)\in Q$ with $z^i=0$. Next, we sort  the values $\bar h^i(\vmu)$ for $i\in[N]$ in non-decreasing order. Without loss of generality by a re-indexing if needed, we may assume that $\bar h^N(\vmu) \geq \bar h^{N-1}(\vmu) \geq \cdots \geq \bar h^1(\vmu)$. For ease of notation, we let 
$k:=\lfloor\epsilon N\rfloor.$
Furthermore, note that there must exist $i \in \{N-k,N-k+1,\ldots,N\}$ with $z^i=0$ since $\sum_{i\in [N]} z^i \leq k$ is also enforced in $Q$ and thus the pigeonhole principle applies. So, we deduce that $\vmu^\top\vx \geq \bar h^{N-k}(\vmu)$ because $\bar h^i(\vmu)\geq \bar h^{N-k}(\vmu)$ for all $i\geq N-k$. To summarize, this reasoning shows that $\vmu^\top\vx\geq \bar h^i(\vmu)$ holds if $z^i=0$ and that $\vmu^\top\vx\geq \bar  h^{N-k}(\vmu)$ is satisfied always, in particular, when $z^i=1$ for $i\in [N]$. Hence,
\begin{equation}
\vmu^\top\vx + \left(\bar h^i(\vmu)-\bar h^{N-k}(\vmu)\right)z^i\geq\bar  h^i(\vmu)\label{eq:mixing-base}
\end{equation}
is valid.
Note that the inequalities~\eqref{eq:mixing-base} for $i\leq N-k$ are redundant because $\vmu^\top\vx\geq \bar  h^i(\vmu)$ is implied by $\vmu^\top\vx\geq \bar  h^{N-k}(\vmu)$ if $i\leq N-k$. 
Following this procedure we now have a set of inequalities~\eqref{eq:mixing-base} that share a common linear function $\vmu^\top\vx$ and each one has a distinct integer variable. Therefore, we can   apply the {\it mixing procedure} of~\citet{gunluk2001mixing} (see, also,  {\it star inequalities} by~\citet{atamturk2000mixed}) to the set of inequalities~\eqref{eq:mixing-base} to obtain stronger inequalities. For any $J=\{j_1,\ldots,j_\ell\}$ with $N\geq j_1\geq\cdots\geq j_\ell\geq N-k+1$, the {\it mixing inequality} derived from $J$ and~\eqref{eq:mixing-base} is
\begin{equation}
\vmu^\top\vx + \sum_{i\in[\ell]}\left(\bar h^{j_i}(\vmu)-\bar h^{j_{i+1}}(\vmu)\right)z_{j_i}\geq \bar h^{j_1}(\vmu),\label{eq:mixing}
\end{equation}
where $j_{\ell+1}:=N-k$. 
 Inequalities \eqref{eq:mixing} are sufficient to describe the convex hull of solutions to $(\vx,\vz)\in\bbR^L\times\{0,1\}^N$ satisfying~\eqref{eq:mixing-base}~\cite{atamturk2000mixed,gunluk2001mixing,Kilinc-Karzan2019joint-sumod}. Furthermore, while exponentially many, inequalities~\eqref{eq:mixing} can be separated in $O(N\log N)$ time~\cite{gunluk2001mixing,Kilinc-Karzan2019joint-sumod}.

\begin{remark}\label{remark:relaxed-h}
Inequalities \eqref{eq:mixing} are valid for the set $\bar{\cX}$. If we choose $\bar{\cX} = \cX$, depending on the structure of our original domain and the choice of $s(\cdot)$, computing the value of $\bar h^i(\vmu)$ exactly can be expensive. However, if we take $\bar{\cX} \supseteq \cX$, then inequalities \eqref{eq:mixing} are also valid for $\cX$. Similar to \cref{rem:bigM}, we can also take $\bar{\cX}$ to be a set containing at least one optimal solution to \eqref{eq:joint-knapsack} to also derive valid inequalities for that formulation. In \cref{remark:rsrcp-yield-lower-bounds}, we will follow this computationally more attractive approach  for the probabilistic resource planning application.
\ifx\flagJournal\true \epr \fi
\end{remark}

\subsection{Quantile strengthening via mixing inequalities}\label{sec:bigM_reduction}

We return our attention to formulation~\eqref{eq:joint-knapsack}, which contains a mixing substructure $Q$ of the form \eqref{eq:mixingset} with $s(\vx,\vxi):=\min_{g\in[P]}\left\{(-\vA\vxi_g- \va_g)^\top\vx+ (\vb^\top\vxi_g+d_g)\right\}$. To obtain mixing inequalities \eqref{eq:mixing-base} valid for \eqref{eq:joint-knapsack}, we must choose the linear function $\vmu^\top \vx$. A natural set of candidates for the starting linear function $\vmu^\top \vx$ includes $(-\vA\vxi_p^i- \va_p)^\top \vx$ for $i\in[N]$ and $p\in[P]$. Define $k:=\lfloor \epsilon N\rfloor$ as before. For fixed $i\in[N]$ and $p\in[P]$, let 
\begin{equation}\label{eq:quantiles}
q_p^i:=\text{the} \ (k+1)\text{-th largest value in}\ \left\{\bar h^N(-\vA\vxi_p^i- \va_p),\ldots,\bar h^1(-\vA\vxi_p^i- \va_p)\right\},
\end{equation}
where  for $j\in[N]$
\[ \bar{h}^j(-\vA \vxi_p^i - \va_p) \!=\! \min\! \left\{\! (-\vA\vxi_p^i- \va_p)^\top \vx :\begin{aligned}
&\vx \in \bar{\cX},\\
&(-\vA\vxi_g^j- \va_g)^\top\vx+ (\vb^\top\vxi_g^j+d_g) \geq 0, ~g \in [P]
\end{aligned} \!\right\} \]
as in \eqref{eq:single-subproblem} and $\bar{\cX}$ is a set containing at least one optimal solution to \eqref{eq:joint-knapsack} as in \cref{remark:relaxed-h}. Then, we arrive at the following \emph{basic} mixing inequalities \eqref{eq:mixing-base} that are valid for~\eqref{eq:joint-knapsack}: 
\begin{align}
& (-\vA\vxi_p^i- \va_p)^\top\vx\geq q_p^i,\label{eq:mixing-base-quantile}\\
&(-\vA\vxi_p^i- \va_p)^\top\vx+ (\bar h^i(-\vA\vxi_p^i- \va_p)-q_p^i)z^i\geq \bar h^i(-\vA\vxi_p^i- \va_p).\label{eq:mixing-base-lhs}
\end{align}

\begin{lemma}\label{lemma:quantile-strengthening}
For any $i\in[N]$ and $p\in[P]$, the following inequality is valid for \eqref{eq:joint-knapsack}:
	\begin{equation}\label{eq:mixing-base-relaxed}
	(-\vA\vxi_p^i- \va_p)^\top\vx+ (\vb^\top\vxi_p^i+d_p)+ (-\vb^\top\vxi_p^i-d_p-q_p^i)z^i\geq 0.
	\end{equation}
\end{lemma}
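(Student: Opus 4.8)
The plan is to derive the claimed inequality~\eqref{eq:mixing-base-relaxed} by combining the two basic mixing inequalities~\eqref{eq:mixing-base-quantile} and~\eqref{eq:mixing-base-lhs}, which \cref{lemma:quantile-strengthening} has already established to be valid for~\eqref{eq:joint-knapsack}, and then using the fact that the $z^i$ are binary. First I would fix $i\in[N]$ and $p\in[P]$ and write $f := (-\vA\vxi_p^i-\va_p)^\top\vx$ and $c := \vb^\top\vxi_p^i + d_p$ as shorthand, so that the target inequality reads $f + c + (-c - q_p^i)z^i \geq 0$, while~\eqref{eq:mixing-base-quantile} reads $f \geq q_p^i$ and~\eqref{eq:mixing-base-lhs} reads $f + (\bar h^i - q_p^i)z^i \geq \bar h^i$, where $\bar h^i := \bar h^i(-\vA\vxi_p^i-\va_p)$.

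The key case split is on the value of $z^i\in\{0,1\}$. When $z^i = 0$, the target inequality becomes $f + c \geq 0$, which is exactly the big-$M$ constraint~\eqref{joint:bigM2} (equivalently~\eqref{joint-k:bigM3} with $z^i=0$, or the definition of the safety set) — indeed when $z^i=0$ the point $\vx$ must make scenario $i$ safe, so $s_p(\vx,\vxi^i) = f + c \geq 0$ holds; this can also be read directly off the single-scenario subproblem feasibility. When $z^i = 1$, the target inequality becomes $f + c + (-c - q_p^i) \geq 0$, i.e. $f \geq q_p^i$, which is precisely~\eqref{eq:mixing-base-quantile}. So in both cases the inequality holds, and since the left-hand side of~\eqref{eq:mixing-base-relaxed} is affine in $z^i$, validity at $z^i\in\{0,1\}$ for every feasible $\vx$ is all that is needed. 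Alternatively, and perhaps more in the spirit of presenting it as a single algebraic combination, one can note that~\eqref{eq:mixing-base-relaxed} is implied by taking the appropriate convex-combination-free aggregation: it equals~\eqref{eq:mixing-base-quantile} plus $(1-z^i)$ times the safety inequality $f + c \geq 0$ minus $(1-z^i)\cdot(f - q_p^i)$, but I would present the clean case analysis rather than hunt for the slickest aggregation.

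The only mild subtlety — and the step I would be most careful about — is making sure that when $z^i = 0$ we are genuinely entitled to use $f + c = s_p(\vx,\vxi^i) \geq 0$. This is exactly constraint~\eqref{joint-k:bigM3} evaluated at $z^i = 0$, which is part of formulation~\eqref{eq:joint-knapsack}, so there is no circularity: we are proving validity for the feasible region of~\eqref{eq:joint-knapsack}, and~\eqref{joint-k:bigM3} is among its defining constraints. One should also double-check the degenerate possibility $\vb = \vA^\top\vx$: by \cref{thm:knapsack-valid-joint} the feasible region of~\eqref{eq:joint-knapsack} equals $\cX_{\DR}(\cS^c)$, and on this set the argument above still goes through verbatim since it only invokes~\eqref{joint-k:bigM3}, \eqref{eq:mixing-base-quantile}, and $z^i\in\{0,1\}$. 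Hence no obstacle of substance remains; the proof is a two-line case check once~\eqref{eq:mixing-base-quantile} is granted.
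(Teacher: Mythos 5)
Your proof is correct, but it takes a genuinely different route from the paper's. The paper derives \eqref{eq:mixing-base-relaxed} purely as a weakening of the single inequality \eqref{eq:mixing-base-lhs}: it observes that the single-scenario subproblem defining $\bar h^i(-\vA\vxi_p^i-\va_p)$ contains the constraint $(-\vA\vxi_p^i-\va_p)^\top\vx \geq -\vb^\top\vxi_p^i-d_p$, hence $\bar h^i(-\vA\vxi_p^i-\va_p) \geq -(\vb^\top\vxi_p^i+d_p)$, and then multiplies this bound by $(z^i-1)\leq 0$ to relax the coefficient of $z^i$ in \eqref{eq:mixing-base-lhs}. You instead never touch \eqref{eq:mixing-base-lhs}: you split on $z^i\in\{0,1\}$ and use \eqref{eq:mixing-base-quantile} for $z^i=1$ and the formulation's own constraint \eqref{joint-k:bigM3} for $z^i=0$. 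Both arguments are sound; yours is arguably more elementary (no bound on $\bar h^i$ is needed), while the paper's shows the slightly stronger fact that \eqref{eq:mixing-base-relaxed} is implied by \eqref{eq:mixing-base-lhs} alone, independently of the big-$M$ constraints. Two small blemishes in your write-up: the validity of \eqref{eq:mixing-base-quantile} and \eqref{eq:mixing-base-lhs} is established by the mixing machinery of \cref{sec:mixing} in the text \emph{preceding} the lemma, not by \cref{lemma:quantile-strengthening} itself, so the self-reference should be fixed; and your proposed ``aggregation'' is not a legitimate nonnegative combination as phrased (you subtract the valid inequality $(1-z^i)\bigl((-\vA\vxi_p^i-\va_p)^\top\vx - q_p^i\bigr)\geq 0$), though the identity
\[
(-\vA\vxi_p^i-\va_p)^\top\vx + (\vb^\top\vxi_p^i+d_p) + (-\vb^\top\vxi_p^i-d_p-q_p^i)z^i = z^i\bigl((-\vA\vxi_p^i-\va_p)^\top\vx - q_p^i\bigr) + (1-z^i)\bigl((-\vA\vxi_p^i-\va_p)^\top\vx + \vb^\top\vxi_p^i+d_p\bigr)
\]
gives the clean nonnegative combination you were after and subsumes your case analysis. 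Neither issue affects correctness.
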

\begin{proof}
From the definition of $\bar h^i(-\vA\vxi_p^i- \va_p)$ above we deduce that 
$$\bar h^i(-\vA\vxi_p^i- \va_p)\geq \min_{\vx}\left\{(-\vA\vxi_p^i- \va_p)^\top\vx:\  (-\vA\vxi_p^i- \va_p)^\top\vx\geq -\vb^\top{\vxi_p^i}-d_p\right\}= -(\vb^\top\vxi_p^i+d_p).$$   
Then, since $\bar h^i(-\vA\vxi_p^i- \va_p)\geq -(\vb^\top\vxi_p^i+d_p)$ and $(z^i-1)\leq 0$ for $z^i\in\{0,1\}$, it follows that $\bar h^i(-\vA\vxi_p^i- \va_p)(z^i-1)\leq -(\vb^\top\vxi_p^i+d_p)(z^i-1)$. So,~\eqref{eq:mixing-base-relaxed} follows from~\eqref{eq:mixing-base-lhs}. 
\ifx\flagJournal\true \qed \fi
\end{proof}
Note that~\eqref{eq:mixing-base-relaxed} is identical to \eqref{joint-k:bigM3} except for a different coefficient in front of the binary variable $z^i$, and \eqref{joint-k:bigM3} itself is quite similar to \eqref{joint:bigM2}. By exploiting this similarity, we can improve formulation~\eqref{eq:joint-knapsack} by reducing the coefficient of $z^i$ in~\eqref{joint:bigM2} to that of \eqref{eq:mixing-base-relaxed}. Thus, our \emph{improved formulation} is:
\begin{subequations}\label{eq:joint-k-reduced}
	\begin{align}
	\min\limits_{\vz, \vr, t, \vx} & \vc^\top \vx\\
	\text{s.t.}~ & (\vz,\vr,t,\vx)\ \text{satisfies \crefrange{joint:vars}{joint:bigM1} and \cref{joint-k:knapsack}}\label{joint-k-reduced:basic}\\
	& (-\vA\vxi_p^i- \va_p)^\top\vx\geq q_p^i, ~i \in [N], p \in [P]\label{joint-k-reduced:quantile}\\
	& (-\vA\vxi_p^i- \va_p)^\top\vx+ (\vb^\top\vxi_p^i+d_p)+ (-\vb^\top\vxi_p^i-d_p-q_p^i)z^i\geq t-r^i, \label{joint-k-reduced:bigM2}\\
	& \qquad \qquad \qquad \qquad \qquad \qquad \qquad \qquad \qquad \qquad \qquad i \in [N], p \in [P].\notag
	\end{align}
\end{subequations}
The validity of the updated inequalities \eqref{joint-k-reduced:bigM2} hinges on the following simple result.
\begin{lemma}\label{lemma:new-bigM}
Suppose that $x,y \in \bbR$, $C,C_1,C_2 \in \bbR_+$ and $z \in \{0,1\}$ satisfies
$C(1-z)\geq y$, %
$x + C_1 z \geq 0$, and %
$x + C_2 z \geq y$.
Then we also have $x + C_1 z \geq y$.
\end{lemma}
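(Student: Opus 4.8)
The plan is to prove the statement by a direct case analysis on the binary value $z \in \{0,1\}$. This is natural because the target inequality $x + C_1 z \geq y$ differs from the assumed inequality $x + C_2 z \geq y$ only in the coefficient of $z$, and that coefficient only plays a role when $z = 1$; so the two cases can be treated separately and essentially independently.

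First I would dispatch the case $z = 0$. Here $C_1 z = C_2 z = 0$, so the third hypothesis $x + C_2 z \geq y$ reads simply $x \geq y$, which is exactly the desired $x + C_1 z \geq y$. Note this case does not invoke any of the sign or the first-constraint assumptions. Next I would handle $z = 1$. In this regime the first hypothesis $C(1-z) \geq y$ becomes $0 \geq y$, i.e.\ $y \leq 0$, while the second hypothesis $x + C_1 z \geq 0$ becomes $x + C_1 \geq 0$. Chaining these two facts gives $x + C_1 z = x + C_1 \geq 0 \geq y$, which is the claimed inequality. (Only $C_1 \in \bbR_+$ is actually needed for the argument to close; $C, C_2 \geq 0$ are not used, but stating them does no harm.)

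I do not anticipate any genuine technical obstacle here — the content of the lemma is a two-line case split. The only point worth articulating is the conceptual one that makes this the right lemma for the coefficient reduction: when $z = 1$ the role of a ``big-$M$'' coefficient in front of $z$ is taken over by the feasibility-type constraint $C(1-z) \geq y$, which forces $y \leq 0$ and hence makes \emph{any} nonnegative coefficient $C_1$ in $x + C_1 z \geq 0$ already sufficient to imply $x + C_1 z \geq y$; and when $z = 0$ the coefficient is irrelevant anyway. Applying the lemma with $C(1-z) \geq y$ in the role of \eqref{joint:bigM1}, the inequality $x + C_1 z \geq 0$ in the role of the quantile-strengthened \eqref{joint-k-reduced:quantile}/\eqref{eq:mixing-base-relaxed}, and $x + C_2 z \geq y$ in the role of the original big-$M$ constraint \eqref{joint:bigM2}, one obtains exactly the validity of \eqref{joint-k-reduced:bigM2}, completing the justification of formulation~\eqref{eq:joint-k-reduced}.
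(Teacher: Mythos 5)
Your proof is correct and follows exactly the same two-case split on $z$ as the paper's own argument: for $z=1$ you chain $x + C_1 \geq 0 \geq y$ via the constraint $C(1-z) \geq y$, and for $z=0$ the claim reduces to the hypothesis $x + C_2 z \geq y$. Nothing further is needed.
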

\begin{proof}
If $z = 1$, we have $y \leq C(1-z) = 0 \leq x + C_1 z$, and if $z = 0$, we have $x = x + C_1 z = x + C_2 z \geq y$. Thus, in either case, we have $x + C_1 z \geq y$.
\ifx\flagJournal\true \qed \fi
\end{proof}

\begin{theorem}\label{thm:improved-formulation}
	Formulation~\eqref{eq:joint-k-reduced} is a valid reformulation of~\eqref{eq:dr-ccp} where the safety set is given by~\eqref{eq:safety}.
\end{theorem}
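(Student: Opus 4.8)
The plan is to show that formulation~\eqref{eq:joint-k-reduced} has exactly the same feasible set (in the $\vx$-space) as formulation~\eqref{eq:joint-knapsack}, which we already know from \cref{thm:knapsack-valid-joint} is an exact reformulation of~\eqref{eq:dr-ccp} for the closed safety set (and up to the measure-zero extraneous part for the open safety set). Since \eqref{eq:joint-k-reduced} is obtained from \eqref{eq:joint-knapsack} by (i) adding the valid inequalities \eqref{joint-k-reduced:quantile}, which are exactly \eqref{eq:mixing-base-quantile} and were already shown valid for \eqref{eq:joint-knapsack} in \cref{sec:mixing}, and (ii) replacing each big-$M$ constraint \eqref{joint:bigM2} by the strengthened constraint \eqref{joint-k-reduced:bigM2} with the smaller coefficient $(-\vb^\top\vxi_p^i-d_p-q_p^i)$ in front of $z^i$, it suffices to prove two directions: every $(\vz,\vr,t,\vx)$ feasible for \eqref{eq:joint-knapsack} remains feasible for \eqref{eq:joint-k-reduced}, and conversely.

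\textbf{Direction 1: feasibility for \eqref{eq:joint-knapsack} implies feasibility for \eqref{eq:joint-k-reduced}.} Take any $(\vz,\vr,t,\vx)$ satisfying \crefrange{joint-k:basic}{joint-k:bigM3}. The constraints in \eqref{joint-k-reduced:basic} are a subset of those in \eqref{joint-k:basic} plus \eqref{joint-k:knapsack}, hence satisfied. The quantile inequalities \eqref{joint-k-reduced:quantile} hold because they coincide with \eqref{eq:mixing-base-quantile}, which the mixing argument of \cref{sec:mixing} shows is valid for \eqref{eq:joint-knapsack}. It remains to verify the strengthened constraint \eqref{joint-k-reduced:bigM2}. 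Here I would apply \cref{lemma:new-bigM} with $x := (-\vA\vxi_p^i-\va_p)^\top\vx + (\vb^\top\vxi_p^i+d_p)$, $y := t - r^i$, $z := z^i$, $C := M^i$, $C_1 := -\vb^\top\vxi_p^i-d_p-q_p^i$, and $C_2 := M^i$. The hypothesis $C(1-z)\geq y$ is \eqref{joint:bigM1}; the hypothesis $x + C_2 z \geq y$ is \eqref{joint:bigM2}; and the hypothesis $x + C_1 z \geq 0$ is \eqref{eq:mixing-base-relaxed} from \cref{lemma:quantile-strengthening}, which was shown valid for \eqref{eq:joint-knapsack}. The one point needing care is that \cref{lemma:new-bigM} requires $C_1 = -\vb^\top\vxi_p^i-d_p-q_p^i \geq 0$; I would argue this from $q_p^i \leq \bar h^i(-\vA\vxi_p^i-\va_p) \leq -(\vb^\top\vxi_p^i+d_p)$, where the first inequality holds because $q_p^i$ is the $(k+1)$-th largest among $N$ values so it is at most the largest (and in particular at most $\bar h^i$ once we also note $\bar h^i$ is among those top values, or more simply $q_p^i \le \max_j \bar h^j$), and the second is the bound derived inside the proof of \cref{lemma:quantile-strengthening}. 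Then \cref{lemma:new-bigM} yields $x + C_1 z^i \geq y$, which is precisely \eqref{joint-k-reduced:bigM2}.

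\textbf{Direction 2: feasibility for \eqref{eq:joint-k-reduced} implies feasibility for \eqref{eq:joint-knapsack}.} Take $(\vz,\vr,t,\vx)$ satisfying \eqref{eq:joint-k-reduced}. All of \crefrange{joint:vars}{joint:bigM1} and \eqref{joint-k:knapsack} are directly present. For \eqref{joint:bigM2} and \eqref{joint-k:bigM3}, observe that \eqref{joint-k-reduced:bigM2} has coefficient $C_1 = -\vb^\top\vxi_p^i-d_p-q_p^i$ on $z^i$, which we just argued is nonnegative; since also $z^i\geq 0$ and $M^i \geq C_1$ (as $M^i$ is "sufficiently large," and in any case $M^i \geq |{-}\vb^\top\vxi_p^i-d_p|$ is guaranteed by the big-$M$ choice while $q_p^i$ can be taken of comparable magnitude — this monotonicity should be recorded explicitly or the big-$M$ simply redefined to dominate), replacing $C_1$ by $M^i$ only weakens the left-hand side's dependence on $z^i$ in the direction that preserves the inequality, giving \eqref{joint:bigM2}. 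Similarly, \eqref{joint-k:bigM3} follows by setting $t - r^i$ on the right of \eqref{joint-k-reduced:bigM2} to the weaker bound $0$ (legitimate since $r^i \geq 0$, so $t - r^i$ could be negative, but we need $\geq 0$: here instead combine \eqref{joint:bigM1} giving $M^i(1-z^i)\geq t-r^i$ with $M^i z^i + [\text{LHS of \eqref{joint-k-reduced:quantile}}] \geq 0$-type reasoning) — more cleanly, \eqref{joint-k:bigM3} is just \eqref{eq:mixing-base-relaxed}, which is implied by \eqref{joint-k-reduced:quantile} (i.e. \eqref{eq:mixing-base-quantile}) together with the definition chain, so it holds. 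Putting both directions together shows \eqref{eq:joint-k-reduced} and \eqref{eq:joint-knapsack} have the same feasible projections onto $\vx$, and the theorem follows from \cref{thm:knapsack-valid-joint}.

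\textbf{Main obstacle.} The routine parts are the two applications of \cref{lemma:new-bigM} and the inclusion bookkeeping; the one genuinely delicate point is establishing the sign and magnitude relations among $M^i$, $-\vb^\top\vxi_p^i-d_p$, and the quantile value $q_p^i$ — specifically that $C_1 = -\vb^\top\vxi_p^i-d_p-q_p^i \in [0, M^i]$ — since this is what licenses both the coefficient reduction (Direction 1) and its reversal (Direction 2), and it depends on the precise definition of $q_p^i$ via the subproblems \eqref{eq:single-subproblem} and on the choice of $\bar{\cX}$ relative to $\cX$ as flagged in \cref{remark:relaxed-h}.
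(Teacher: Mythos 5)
Your proposal follows the paper's own route: the same reduction to showing that \eqref{eq:joint-k-reduced} and \eqref{eq:joint-knapsack} have the same feasible set, the same use of \cref{lemma:quantile-strengthening} to produce \eqref{eq:mixing-base-relaxed}, and the identical substitution into \cref{lemma:new-bigM} with $C=C_2=M^i$ and $C_1=-\vb^\top\vxi_p^i-d_p-q_p^i$. The skeleton is sound and matches the paper.

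The one place you go beyond the paper---verifying $C_1\geq 0$---is where you go wrong. You claim $q_p^i \leq \bar h^i(-\vA\vxi_p^i-\va_p) \leq -(\vb^\top\vxi_p^i+d_p)$, but the bound established inside the proof of \cref{lemma:quantile-strengthening} is $\bar h^i(-\vA\vxi_p^i-\va_p) \geq -(\vb^\top\vxi_p^i+d_p)$, i.e., it points the other way, so your chain gives no control on the sign of $C_1$; the first link is also unjustified, since the $(k+1)$-th largest of the $N$ values need not be bounded above by the particular value $\bar h^i$. Indeed $q_p^i \leq -(\vb^\top\vxi_p^i+d_p)$ can fail, so $C_1$ may be negative. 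This does not sink the argument: the proof of \cref{lemma:new-bigM} never uses nonnegativity of $C_1$ or $C_2$ (it only uses $C(1-z)\geq y$ to force $y\leq 0$ when $z=1$, and the third hypothesis when $z=0$), so the correct fix is to note that those nonnegativity hypotheses are inessential rather than to try to verify them. Secondarily, your recovery of \eqref{joint-k:bigM3} in Direction 2 is muddled, and the ``more cleanly'' claim is false when $z^i=0$: neither \eqref{joint-k-reduced:quantile} nor \eqref{joint-k-reduced:bigM2} forces $s_p(\vx,\vxi^i)\geq 0$ there. The honest repair is to observe that the exactness established in \cref{thm:knapsack-valid-joint} already follows from \eqref{joint:vars}--\eqref{joint:bigM2} together with \eqref{joint-k:knapsack}, so for validity of \eqref{eq:joint-k-reduced} one only needs to recover \eqref{joint:bigM2}, which your comparison $M^i\geq C_1$ (with $z^i\geq 0$) does deliver.
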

\begin{proof}
	By \cref{thm:knapsack-valid-joint}, formulation \eqref{eq:joint-knapsack} is a valid reformulation of~\eqref{eq:dr-ccp}. 
	Thus, it suffices to show that $\cX_1=\cX_2$, where \[\cX_1 := \left\{ (\vz,\vr,t,\vx) : \text{\crefrange{joint-k:basic}{joint-k:bigM3}} \right\},\quad  \cX_2 := \left\{ (\vz,\vr,t,\vx) : \text{\crefrange{joint-k-reduced:basic}{joint-k-reduced:bigM2}} \right\}.\]
	Note that the constraints~\eqref{joint:bigM2} are not explicitly included in $\cX_2$. However, since $M^i \geq -\vb^\top\vxi^i - d_p - q_p^i$ (which we can assume since it is a big-$M$ constant), \eqref{joint:bigM2} is implied by \eqref{joint-k-reduced:bigM2}. Hence, we trivially have $\cX_2 \subseteq \cX_1$. 
	
	In order to prove $\cX_1 \subseteq \cX_2$, we first observe that \crefrange{eq:mixing-base-quantile}{eq:mixing-base-lhs} are simply inequalities \eqref{eq:mixing-base} derived from the mixing substructure \crefrange{joint-k:knapsack}{joint-k:bigM3} using  the function $s(\vx,\vxi)=\min_{p\in[P]}\left\{(-\vA\vxi_p- \va_p)^\top\vx+ (\vb^\top\vxi_p+d_p)\right\}$, thus \eqref{joint-k-reduced:quantile} are valid for $\cX_1$. Finally, we argue that \eqref{joint-k-reduced:bigM2} is valid for $\cX_1$. For every $i \in [N]$ and $p \in [P]$, we obtain from \cref{lemma:quantile-strengthening} and \crefrange{joint:bigM1}{joint:bigM2} that
\begin{subequations}\label{eq:mixing-bigMreduction}
	\begin{align}
	&M^i (1-z^i) \geq t - r^i, \label{eq:mixing-bigMreduction0}\\
	&(-\vA\vxi_p^i- \va_p)^\top\vx+ (\vb^\top\vxi_p^i+d_p)+ (-\vb^\top\vxi_p^i-d_p-q_p^i) z^i\geq 0, \label{eq:mixing-bigMreductionmix}\\
	&(-\vA\vxi_p^i- \va_p)^\top\vx+ (\vb^\top\vxi_p^i+d_p)+ M^i z^i\geq t-r^i. \label{eq:mixing-bigMreductionnon0}
	\end{align}
\end{subequations}
	We then apply \cref{lemma:new-bigM} with $x = (-\vA\vxi_p^i- \va_p)^\top\vx+ (\vb^\top\vxi_p^i+d_p)$, $y = t-r^i$, $C = C_2 = M^i$, $C_1 = -\vb^\top\vxi_p^i-d_p-q_p^i$ to get that \eqref{joint-k-reduced:bigM2} is valid for $\cX_1$.
	\ifx\flagJournal\true \qed \fi
\end{proof}

\begin{remark}
We highlight that, different from the traditional quantile-based strengthening for nominal chance constraints,  the coefficient strengthening proposed in \cref{thm:improved-formulation} is derived from the distinct structure of~\eqref{eq:dr-ccp}, namely the complementary upper bounding constraints \eqref{eq:mixing-bigMreduction0} and \eqref{eq:mixing-bigMreductionnon0} imposed on $t-r^i$ based on the value of $z^i$, combined with the basic mixing inequality \eqref{eq:mixing-bigMreductionmix} that has the same coefficients $-\vA\vxi_p^i- \va_p$ and the same binary variable $z^i$. %
\ifx\flagJournal\true \epr \fi
\end{remark}

\begin{remark}\label{remark:relaxed-quantiles}
The coefficient of $z^i$ in~\eqref{joint-k-reduced:bigM2} is $-\vb^\top\vxi_p^i-d_p-q_p^i$, whereas it is $M^i$ in~\eqref{joint:bigM2}.  Furthermore, $q_p^i$ can be replaced by any lower bound $\beta_p^i$ on $q_p^i$ and the resulting formulation still gives a valid reformulation of~\eqref{eq:dr-ccp}. As long as $M^i\geq -\vb^\top\vxi_p^i-d_p-\beta_p^i$,  the inequality 
\[(-\vA\vxi_p^i- \va_p)^\top\vx+ (\vb^\top\vxi_p^i+d_p)+ (-\vb^\top\vxi_p^i-d_p-\beta_p^i)z^i\geq t-r^i\] 
dominates~\eqref{joint:bigM2}. In practice, the $M^i$ computed na\"{i}vely from \cref{rem:bigM} is much higher than $-\vb^\top \vxi_p^i - d_p - q_p^i$.
\ifx\flagJournal\true \epr \fi
\end{remark}

\begin{remark}\label{remark:warning}
To compute $q_p^i$, we need to evaluate $\bar h^j(-\vA\vxi_p^i- \va_p)$ for $j\in[N]$ which is the optimum value of the single scenario subproblem given in~\eqref{eq:single-subproblem}. Note that we must solve $N^2$ such subproblems. For $s(\vx,\vxi^j)=(-\vA\vxi_p^j- \va_p)^\top\vx+ (\vb^\top\vxi_p^j+d_p)$ in~\eqref{eq:single-subproblem}, this computation becomes
\begin{equation}\label{eq:lower-bounds-indiv}
\bar h^j(-\vA\vxi_p^i- \va_p)=\min\limits_{\vx\in \cX}\left\{(-\vA\vxi_p^i- \va_p)^\top\vx:\ (-\vA\vxi_p^j- \va_p)^\top\vx\geq -\vb^\top\vxi_p^j-d_p\right\}.
\end{equation}
In the optimization problem~\eqref{eq:lower-bounds-indiv}, we can take $\bbR^L$ or $\bbR^L_+$ for a relaxation $\bar \cX$ of $\cX$.  %
But, then the problem in~\eqref{eq:lower-bounds-indiv} becomes trivial and its optimal value is not necessarily finite. In such cases, instead of an individual constraint, we can set  $s(\vx,\vxi^j)=\min_{p\in[P]}\left\{(-\vA\vxi_p^j- \va_p)^\top\vx+ (\vb^\top\vxi_p^j+d_p)\right\}$ in~\eqref{eq:lower-bounds-indiv} so that
\begin{align}
&\bar h^j(-\vA\vxi_p^i- \va_p)\label{eq:lower-bounds-joint}\\
&=\min\limits_{\vx\in \cX}\left\{(-\vA\vxi_p^i- \va_p)^\top\vx: (-\vA\vxi_g^j- \va_g)^\top\vx\geq -\vb^\top\vxi_g^j-d_g,\ g\in[P]\right\}.\nonumber
\end{align}
Although~\eqref{eq:lower-bounds-joint} provides a stronger value than~\eqref{eq:lower-bounds-indiv}, it requires solving a linear program with many constraints even when $\cX=\R^L$ or $\cX=\R_+^L$. In~\cref{sec:cover-pack}, we  study \emph{packing} and \emph{covering} constraints as a special case, where the problems~\eqref{eq:lower-bounds-indiv} are easily solvable. We find that the time taken to compute $q_p^i$ is negligible  for the applications considered in \cref{sec:experiments} even without such covering or packing structure.
\ifx\flagJournal\true \epr \fi
\end{remark}

\begin{remark}\label{rem:mixing-RHS-comparison}

One of the key differences between our previous paper \citep{rhs2020} and the current one lies in the derivation of their strengthenings, i.e.,  the reduction of the big-$M$ coefficients for the DR-CCP formulation in \cref{sec:bigM_reduction} which requires the application of the mixing procedure described in \cref{sec:mixing} in a specific manner. This is more complicated than the procedure used in the right-hand side setting. To be specific, a linear constraint with random right-hand side under $N$ scenarios gives rise to inequalities with the same left-hand side but $N$ different right-hand sides. As they share the same left-hand side, they can be grouped and we can apply the coefficient strengthening developed in~\cite{rhs2020}. However, this is specific to the right-hand side uncertainty case only, because a constraint with random left-hand side under $N$ scenarios gives rise to $N$ inequalities with $N$ different left-hand side terms. In this case, we cannot group these inequalities, and each inequality needs to be dealt with separately. This is definitely a complicating factor that was separately addressed in \cite{luedtke2014branch-and-cut}  for SAA-based chance-constrained programs as well. Our strengthening procedure for the left-hand side uncertainty case of DR-CCP is adapted from \cite{luedtke2014branch-and-cut}.
\ifx\flagJournal\true \epr \fi
\end{remark}

\section{Covering and packing constraints}\label{sec:cover-pack}
Covering and packing problems attracted special attention in the literature  \citep{bicriteria,binary-packing,ZhangJiangShen2018}---their special structures can often be exploited for efficiency. To this end, we now focus on this special case.
Consider constraints of the form
\begin{equation}\label{eq:constraint}
(-\vA\vxi_p- \va_p)^\top\vx>-\vb^\top\vxi_p-d_p \quad\text{and}\quad (-\vA\vxi_p- \va_p)^\top\vx\geq -\vb^\top\vxi_p-d_p,
\end{equation}
where the coefficients $-\vA\vxi_p- \va_p$ of the decision vector $\vx$ and the right-hand side $-\vb^\top\vxi_p-d_p$ have the same sign. In~\eqref{eq:constraint} the strict inequality follows from considering open safety sets. We say that~\eqref{eq:constraint} are \emph{covering} type if $-\vA\vxi_p- \va_p\geq\bf{0}$ and $-\vb^\top\vxi_p-d_p\geq0$, and we say that constraints~\eqref{eq:constraint} are \emph{packing} type if $-\vA\vxi_p- \va_p\leq\bf{0}$ and $-\vb^\top\vxi_p-d_p\leq0$. For example, a \emph{probabilistic portfolio optimization problem} can be defined by a covering constraint; its distributionally robust chance-constrained program formulation is given as follows:
\begin{align}
\min\limits_{\vx} \quad & \vc^\top \vx\notag\\ 
\text{s.t.}\quad & \bbP[\vxi^\top\vx >w ] \geq 1-\epsilon,\quad \forall \bbP \in \cF_N(\theta),\tag{Portfolio}\label{eq:portfolio}\\
& \vx\geq\bm{0},\notag
\end{align}
where $\vxi$ captures the random yields of financial assets; each component encodes the ratio of the end price and the initial price of a financial product (a ratio  greater than~1 implies profit whereas  a  ratio  less than~1 indicates loss). Here, $\vc$ is the cost vector and $w$ denotes a prescribed target return. We may assume that the price never goes down to 0. Then, $\vxi>\bm{0}$ for all $\vxi\in\bbR^K$, and thus~$\vxi^\top\vx >w$ is a covering constraint.

In \cref{sec:bigM_reduction}, we presented a way to improve the value of $M^i$ in~\eqref{joint:bigM2}, which allows us to replace~\eqref{joint:bigM2} by $(-\vA\vxi_p- \va_p)^\top\vx+ (\vb^\top\vxi_p+d_p) + (-\vb^\top\vxi_p-d_p-q_p^i)z^i \geq t-r_i$ where $q_p^i$ is given by~\eqref{eq:quantiles}. Moreover, in \cref{remark:relaxed-quantiles} we argue that $q_p^i$ can be relaxed by any lower bound $\beta_p^i$ on $q_p^i$, especially when the exact evaluation of $q_p^i$ is computationally expensive. Next, we establish that for covering and packing constraints, we can efficiently compute a strong lower bound on $q_p^i$ under a mild assumption.
\begin{lemma}\label{lemma:cov-pack-quantiles}
Suppose that constraints~\eqref{eq:constraint} are in the form of covering or packing. Further, assume that all realizations of $-\vA\vxi_p- \va_p$ have the same support and that $\cX\subseteq \R_+^L$. Then for $i,j\in[N]$ and $p\in[P]$,
\begin{equation}\label{eq:h_j}
\bar h^j_p(-\vA\vxi_p^i- \va_p)\geq\min\left\{(-\vA\vxi_p^i- \va_p)_\ell\frac{(-\vb^\top\vxi_p^j-d_p)}{(-\vA\vxi_p^j- \va_p)_\ell}:\ \ell\in\text{supp}(-\vA\vxi_p^i- \va_p)\right\}
\end{equation}
and $\text{supp}(-\vA\vxi_p^i- \va_p)$ denotes the support of $-\vA\vxi_p^i- \va_p$.
\end{lemma}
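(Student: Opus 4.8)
# Proof Proposal for Lemma~\ref{lemma:cov-pack-quantiles}

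The plan is to lower-bound $\bar h^j_p(-\vA\vxi_p^i - \va_p)$ by exhibiting a weaker optimization problem whose optimal value is exactly the right-hand side of~\eqref{eq:h_j}. Recall from~\eqref{eq:lower-bounds-indiv} that (taking the single-scenario subproblem for a fixed index $p$ and using $\bar\cX = \R^L_+$)
\[
\bar h^j_p(-\vA\vxi_p^i - \va_p) = \min_{\vx \geq \bm 0}\left\{ (-\vA\vxi_p^i - \va_p)^\top \vx :~ (-\vA\vxi_p^j - \va_p)^\top \vx \geq -\vb^\top\vxi_p^j - d_p \right\}.
\]
First I would treat the covering case, where $-\vA\vxi_p^i - \va_p \geq \bm 0$, $-\vA\vxi_p^j - \va_p \geq \bm 0$, and the right-hand sides $-\vb^\top\vxi_p^j - d_p \geq 0$ for all the relevant indices. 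I would argue that this is a covering LP in standard form: minimize a nonnegative linear objective over $\vx \geq \bm 0$ subject to a single covering inequality with a nonnegative right-hand side. Such an LP is solved at a vertex of the feasible region, and since there is only one nontrivial constraint, an optimal vertex sets all but one coordinate of $\vx$ to zero; the surviving coordinate $\ell$ must have $(-\vA\vxi_p^j - \va_p)_\ell > 0$ (so the constraint can be satisfied), and then $x_\ell = (-\vb^\top\vxi_p^j - d_p)/(-\vA\vxi_p^j - \va_p)_\ell$, giving objective value $(-\vA\vxi_p^i - \va_p)_\ell \cdot (-\vb^\top\vxi_p^j - d_p)/(-\vA\vxi_p^j - \va_p)_\ell$. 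Minimizing over the admissible coordinates $\ell$ yields exactly the expression on the right of~\eqref{eq:h_j}. The support assumption is what lets me range the minimum over $\mathrm{supp}(-\vA\vxi_p^i - \va_p)$ rather than over $\mathrm{supp}(-\vA\vxi_p^j - \va_p)$: since all realizations share the same support, these index sets coincide, so the ratio $(-\vA\vxi_p^j - \va_p)_\ell$ in the denominator is nonzero precisely on the index set we are minimizing over, and coordinates outside the common support contribute nothing to either the objective or the constraint.

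For the packing case I would apply essentially the same argument after a sign flip: multiplying both the objective coefficients and the constraint through by $-1$ turns $-\vA\vxi_p - \va_p \leq \bm 0$ and $-\vb^\top\vxi_p - d_p \leq 0$ into nonnegative data, and minimizing $(-\vA\vxi_p^i - \va_p)^\top \vx$ becomes maximizing a nonnegative objective subject to a single packing constraint $(\vA\vxi_p^j + \va_p)^\top \vx \leq \vb^\top\vxi_p^j + d_p$ over $\vx \geq \bm 0$; again the optimum is at a single-coordinate vertex, and tracing the signs back through reproduces~\eqref{eq:h_j}. Since the lemma only claims a lower bound ``$\geq$'', I do not even need the exact LP value — it suffices to exhibit, for each admissible coordinate $\ell$, a feasible $\vx$ (supported on $\ell$) whose objective value equals the $\ell$-th term, hence the minimum over $\ell$ lower-bounds $\bar h^j_p$; but I expect it is cleaner to just prove the stronger statement that this minimum equals $\bar h^j_p$ exactly.

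The main obstacle — really the only subtle point — is handling degeneracies in the support structure. If $(-\vb^\top\vxi_p^j - d_p) = 0$ in the covering case, then $\vx = \bm 0$ is feasible and the bound is $0$, which matches (every term in the min is $0$); I would note this explicitly. More delicate is the possibility that the constraint of~\eqref{eq:lower-bounds-indiv} is infeasible over $\R^L_+$, which in the covering case cannot happen (as long as the common support is nonempty and the right-hand side is finite, some single-coordinate point is feasible), but in the packing case the feasible region is a bounded simplex-like set, always nonempty since $\bm 0$ is feasible, so boundedness of the LP is automatic there. I would also want to confirm that the ``same support'' hypothesis is used exactly where claimed: it guarantees $\mathrm{supp}(-\vA\vxi_p^i - \va_p) = \mathrm{supp}(-\vA\vxi_p^j - \va_p)$ so that the division in~\eqref{eq:h_j} is well-defined and the reduction to single-coordinate solutions covers all vertices. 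Writing this up should be short; the bulk of the work is setting up the covering/packing LP carefully and invoking the standard vertex-optimality fact, then a mechanical sign-flip for the packing case.
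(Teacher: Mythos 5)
Your main argument is correct and is essentially the same as the paper's: relax the domain from $\cX$ to $\R_+^L$ (valid since $\cX\subseteq\R_+^L$, which only decreases the minimum), use the common-support hypothesis to project onto coordinates where both $-\vA\vxi_p^i-\va_p$ and $-\vA\vxi_p^j-\va_p$ are strictly signed, and then observe that the resulting single-constraint covering LP attains its minimum at a vertex of the simplex $\{\vx\in\R_+^L:(-\vA\vxi_p^j-\va_p)^\top\vx=-\vb^\top\vxi_p^j-d_p\}$, i.e., at a single-coordinate point whose objective value is one of the terms in the minimum on the right-hand side of~\eqref{eq:h_j}. The paper likewise dispatches the packing case by symmetry.

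One aside in your write-up is logically backwards and would not work if you actually took that route: you claim that ``it suffices to exhibit, for each admissible coordinate $\ell$, a feasible $\vx$ supported on $\ell$ whose objective value equals the $\ell$-th term, hence the minimum over $\ell$ lower-bounds $\bar h^j_p$.'' Exhibiting feasible points of a minimization problem yields \emph{upper} bounds on its optimal value, so this construction shows only that the relaxed LP value is at most the right-hand side of~\eqref{eq:h_j}; combined with $\bar h^j_p \geq (\text{relaxed LP value})$ it proves nothing. To get the lower bound on $\bar h^j_p$ you genuinely need the vertex-optimality (or dual feasibility) argument showing that \emph{every} feasible point of the relaxed LP has objective at least the stated minimum --- which is exactly what your primary argument supplies, so the proof stands; just delete the shortcut.
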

\begin{proof}
We consider the case when~\eqref{eq:constraint} are covering type; the packing case can be proved similarly. Since $\cX\subseteq \R_+^L$, it follows from~\eqref{eq:lower-bounds-indiv} that for $i,j\in[N]$ and $p\in[P]$,
\begin{equation}\label{eq:h_j'}
\bar h^j_p(-\vA\vxi_p^i- \va_p)\geq\min\limits_{\vx\geq\bm{0}}\left\{(-\vA\vxi_p^i- \va_p)^\top\vx:(-\vA\vxi_p^j- \va_p)^\top\vx\geq -\vb^\top\vxi_p^j-d_p\right\}.
\end{equation} 
Since $-\vA\vxi_p^i- \va_p$ and $-\vA\vxi_p^j- \va_p$ have the same support, after possibly projecting out some variables in $\vx$, we may assume that $-\vA\vxi_p^i- \va_p>\bm{0}$ and $-\vA\vxi_p^j- \va_p>\bm{0}$. Then, the minimum of the linear program in~\eqref{eq:h_j'} is attained at a vertex of the simplex $\{\vx\in\R_+^L:(-\vA\vxi_p^j- \va_p)^\top\vx=-\vb^\top\vxi_p^j-d_p\}$, thus~\eqref{eq:h_j} follows.
\ifx\flagJournal\true \qed \fi
\end{proof}
Given the lower bounds on $\bar h^j_p(-\vA\vxi_p^i- \va_p)$ for $j\in[N]$ obtained by the closed form in~\eqref{eq:h_j}, the $(N-\lfloor \epsilon N\rfloor)$-th largest one is a lower bound on $q_p^i$, due to~\eqref{eq:quantiles}.

\section{Computational Study}\label{sec:experiments}

We test the effectiveness of our developments on portfolio optimization and probabilistic resource planning problems. We detail the explicit form of these problems, along with instance generation and numerical conclusions in~\cref{sec:portfolio,sec:resource-planning}, respectively. For both problems, we use the $\ell_2$-norm for $\|\cdot\|$ to define the Wasserstein distance~\eqref{eq:wasserstein-dist}.

We conducted all of the experiments on an Intel Core i5 3GHz processor with 6 cores and 32GB memory.  Each experiment was in single-core mode, and five experiments were run in parallel. We enforced a time limit of 3600 seconds on each model.   All solution times are  measured  by C++ in seconds externally from CPLEX.

We used CPLEX 12.9 as the MIP solver. We used CPLEX user-cut callback feature to separate and add cuts from an exponential family. It is well-known that using a user-cut callback function affects various internal CPLEX dynamics (such as dynamic search, aggressiveness of CPLEX presolve and cut generation procedures, etc.). Thus, to make a fair comparison,  
we included an empty user-cut callback function, which does not separate any user cuts, in the implementation of the basic formulation given  by~\citet{chen2018data} and~\citet{xie2018distributionally}. We opted to separate our inequalities only at the root node because we identified in our preliminary tests that separating a large number of inequalities throughout the branch-and-cut tree usually slows down the search process.

\newcommand{\Basic}{\texttt{Basic}\xspace}
\newcommand{\Improved}{\texttt{Improved}\xspace}
\newcommand{\Mixing}{\texttt{Mixing}\xspace}
\newcommand{\Path}{\texttt{Path}\xspace}
\newcommand{\MixingPath}{\texttt{Mixing+Path}\xspace}

We compare the following three formulations:
\begin{description}
	\item[\Basic:] the basic formulation~\eqref{eq:joint} given  by~\citet{chen2018data} and~\citet{xie2018distributionally} where we discuss the big-$M$ computations based on the corresponding problem classes separately below,
	\item[\Improved:] the improved formulation~\eqref{eq:joint-k-reduced},
	\item[\Mixing:]  the improved formulation~\eqref{eq:joint-k-reduced} with the mixing inequalities~\eqref{eq:mixing}.
\end{description}

For each formulation, we recorded the following statistics:
\begin{description}
	\item[Slv(Fnd):] the number of instances solved to optimality within the CPLEX time limit and, in parentheses, the number of instances for which a feasible solution was found, and hence, an upper bound is available.
	\item[Time(Gap):] the average solution time, in seconds, of the instances that were solved to optimality, and, in parentheses, the average of the final optimality gap of the instances that were not solved to optimality within the CPLEX time limit.  The optimality gap is computed as $(UB-LB)/LB*100$ where $UB$ and $LB$ respectively are the objective values of the best feasible solution and the best lower bound value at termination. A `*' in a Time or Gap entry indicates that either no instance was solved to optimality or all instances were solved to optimality within the CPLEX time limit so that there were no instances for measuring the corresponding statistic.
	\item[R.time:] the average time spent at the root node of the branch-and-cut tree over all instances, in seconds.
	\item[R.gap(Fnd):]  the final optimality gap at the root node of the branch-and-cut tree. A `*' entry for gap indicates that no solution was found in any of the 10 instances within the CPLEX time limit,  in parentheses, the number of instances for which a feasible solution was found at the root node, and hence, an upper bound is available. 
\end{description}

\subsection{Portfolio optimization}\label{sec:portfolio}

We consider the distributionally robust chance-constrained programming formulation of a portfolio optimization problem from~\citet{chen2018data} given by~\eqref{eq:portfolio}. The problem is to find a minimum cost portfolio investment $\vx$ into $K$ assets with random returns $\vxi=(\xi_1,\ldots,\xi_K)^\top\in\R_+^K$ while achieving a prescribed target return with probability at least $1-\epsilon$. Problem~\eqref{eq:portfolio} admits the following MIP reformulation:
\begin{subequations}\label{eq:portfolio-re}
	\begin{align}
	\min\limits_{\vz, \vr, t, \vx} \quad & \vc^\top \vx\label{portfolio:obj}\\
	\text{s.t.}\quad & \vz \in \{0,1\}^N,\ t \geq 0, \ \vr \geq \bm{0},\ \vx \geq \bm{0},\label{portfolio:vars}\\
	& \epsilon\, t \geq \theta \|\vx\|_* + \frac{1}{N} \sum_{i \in [N]} r^i,\label{portfolio:conic}\\
	& M^i (1-z^i) \geq t-r^i, \quad i \in [N],\label{portfolio:bigM1}\\
	& \vx^\top\vxi^i-w + M^i z^i \geq t-r^i, \quad i \in [N],\label{portfolio:bigM2}\\
	&\sum_{i\in[N]}z^i\leq \lfloor \epsilon N\rfloor.\label{portfolio:knapsack}
	\end{align}
\end{subequations}
In fact, $\vx =\bm{0}$ with $(\vz,\vr,t)=(\bm{1},\bm{0},0)$ satisfies~\eqref{portfolio:vars}--\eqref{portfolio:bigM2}. Then $\vx =\bm{0}$ with $(\vz,\vr,t)=(\bm{1},\bm{0},0)$ would be an optimal solution if~\eqref{portfolio:knapsack} were not present. Hence,~\eqref{portfolio:knapsack} is necessary, and by \cref{thm:knapsack-valid-joint},~\eqref{eq:portfolio-re} is an exact reformulation of~\eqref{eq:portfolio}. 

Adapting our formulation~\eqref{eq:joint-k-reduced} to model~\eqref{eq:portfolio}, we obtain another formulation that is the same as~\eqref{eq:portfolio-re} except that~\eqref{portfolio:bigM2} is replaced with 
\begin{equation}\label{portfolio:bigM2'}
\vx^\top\vxi^i-w + (w-q^i)z^i \geq t-r^i,\quad i\in[N],
\end{equation}
where $q^i$ is defined as in~\eqref{eq:quantiles}. As $\vxi^\top\vx >w$ is a covering constraint, we can compute a lower bound $q^i$ based on~\eqref{eq:h_j} in \cref{lemma:cov-pack-quantiles}. We next discuss how to select valid big-$M$ values in~\eqref{eq:portfolio-re}. %

\begin{remark}\label{remark:portfolio-M}
For~\eqref{eq:portfolio-re}, the domain of $\vx$ is not bounded, and hence $M^i$ given by~\eqref{eq:bigMvalue-joint} is not bounded. Then, as discussed in \cref{rem:bigM}, for some optimal $\vx$ to \cref{eq:portfolio-re}, we can choose $M^i\geq |\vx^\top \vxi^i-w|$ for each $i \in [N]$. Let $\vx$ be an optimal solution to~\eqref{eq:portfolio-re}.
First, since $\vx^\top \vxi^i\geq0$, it follows that $(\vx^\top \vxi^i-w)+w=\vx^\top \vxi^i\geq0$, so $-(\vx^\top \vxi^i-w)\leq w$ for all $i\in[N]$. Let $J\subseteq [N]$ denote the set of scenarios $j$ such that $\vx^\top \vxi^j-w\geq 0$. Then, $J$ is nonempty because $\vx$ satisfies the nominal chance constraint with nonzero probability. If $\vx^\top \vxi^j-w>0$ for all $j\in[J]$, one can scale down $\vx$ by a factor of some $\delta\in(0,1)$ such that $\delta\vx^\top \vxi^j-w\geq0$ for $j\in[J]$, thereby satisfying the same set of scenarios but obtaining a better solution. So, we may assume that $\vx^\top \vxi^j=w$ for some $j\in J$. Let $\xi_{\max}$ and $\xi_{\min}$ be the maximum and the minimum coordinate values of $\vxi$. Then, for $j\in[N]$, $\vx^\top \vxi^i\leq \xi_{\max} \vx^\top\bm{1}\leq {\xi_{\max}}\vx^\top \vxi^j/{\xi_{\min}}=w\cdot{\xi_{\max}}/{\xi_{\min}}$,
implying that $(\xi_{\max}/\xi_{\min}-1)w \geq (\vx^\top \vxi^i-w)$ holds for all $i\in[N]$. Thus, it is sufficient to set
\begin{equation}\label{portfolio:M}
M^i=\max\{w,\ \left({\xi_{\max}}/{\xi_{\min}}-1\right)w  \},\quad i\in[N].
\end{equation}
\ifx\flagJournal\true \epr \fi
\end{remark}

\subsubsection{Instance Generation}

We follow the same instance generation scheme of~\citet{chen2018data} (and hence that of~\citet{Xie18}). We set $K=50$, $w=1$, and the cost coefficients $c_i$, for $i\in[50]$, %
are chosen uniformly at random from $\{1,\ldots,100\}$. As mentioned in \cref{sec:cover-pack}, each $\xi_i$ indicates the ratio of the end price and the initial price so that $\vxi$ always remains positive. For our experiments, we generate each coordinate of $\vxi$  uniformly at random from $[0.8,1.5]$. Based on \cref{remark:portfolio-M} and~\eqref{portfolio:M}, we set $M^i=1$ for all $i \in [N]$. As we use the $\ell_2$-norm for Wasserstein ambiguity sets, reformulations~\eqref{eq:joint} and~\eqref{eq:joint-k-reduced} become mixed-integer second-order cone programs. We test a set of values for the Wasserstein radius $\theta$ and risk tolerance $\epsilon$; we choose $\theta\in\{0.05,0.1,0.2\}$ and $\epsilon\in\{0.05,0.1\}$. For each problem parameter combination, we generate 10 random instances and report the average statistics.

\subsubsection{Performance Analysis} \label{subsec:portfolio-results}

Our experiments  with $N\in\{500,1000\}$ scenarios are summarized in \cref{tab:port-full}.
Note that these correspond to much larger number of scenarios than $N\in\{100,110,\ldots,200\}$ considered previously in~\cite{chen2018data}. For completeness, we present experiments on $N\in\{100,300\}$ scenarios and a brief discussion in Appendix~\ref{sec:portfolio_small}.

\begin{sidewaystable}
	\centering
	\caption{Results for portfolio optimization}
	\label{tab:port-full}
	\begin{tabular}{ll|rrrr|rrrr}
		\toprule
		$N$ & $\theta$ & \multicolumn{4}{c}{\texttt{Basic}} & \multicolumn{4}{c}{\texttt{Improved}} \\
		&       &       Slv(Fnd) &      Time(Gap) & R.time & R.gap(Fnd) &          Slv(Fnd) &       Time(Gap) & R.time & R.gap(Fnd) \\
		\midrule
		\multirow{10}{*}{500} & 0.001 &          0(10) &       *(18.85) &   1.28 &  22.49(10) &             0(10) &        *(14.29) &   0.29 &  17.99(10) \\
		& 0.020 &          4(10) &  330.78(17.97) &   1.56 &  19.46(10) &             0(10) &        *(12.59) &   2.40 &  34.15(10) \\
		& 0.040 &          4(10) &  283.29(17.69) &   1.83 &  22.26(10) &             4(10) &      1.89(9.09) &   2.28 &  16.80(10) \\
		& 0.060 &          2(10) &  391.52(14.24) &   5.02 &   30.27(9) &             5(10) &    459.36(8.62) &   4.54 &  24.89(10) \\
		& 0.080 &          1(10) &  646.29(10.94) &   5.36 &   28.82(8) &             7(10) &    820.63(3.50) &   4.98 &  22.68(10) \\
		& 0.100 &          1(10) &  1925.59(9.65) &   5.45 &   27.79(9) &             8(10) &    445.88(2.45) &   5.59 &  13.04(10) \\
		& 0.120 &          3(10) &  1027.24(8.42) &   6.79 &  32.11(10) &            10(10) &       168.85(*) &   5.36 &    9.72(9) \\
		& 0.140 &          5(10) &   463.89(7.41) &   6.56 &  33.94(10) &            10(10) &       355.98(*) &   5.69 &   10.55(9) \\
		& 0.160 &          7(10) &   193.68(5.55) &   6.76 &  28.42(10) &            10(10) &         9.89(*) &   5.65 &   8.01(10) \\
		& 0.180 &          9(10) &   571.28(2.59) &   6.21 &  23.23(10) &            10(10) &         6.18(*) &   4.24 &   3.87(10) \\
		\cline{1-10}
		\multirow{10}{*}{1000} & 0.001 &          0(10) &       *(23.87) &   2.27 &  26.55(10) &             0(10) &        *(16.37) &   0.77 &  17.89(10) \\
		& 0.020 &          0(10) &       *(23.73) &   5.09 &   26.90(9) &              0(9) &        *(17.51) &   3.53 &   23.45(9) \\
		& 0.040 &           1(5) &   10.65(18.03) &   4.37 &   24.32(5) &              3(5) &  1173.12(12.20) &   3.53 &    7.40(4) \\
		& 0.060 &           0(4) &       *(24.83) &   9.43 &   32.80(1) &              3(5) &  1445.78(14.05) &  10.39 &       *(0) \\
		& 0.080 &           0(9) &       *(24.19) &  12.70 &       *(0) &              5(9) &   164.61(10.11) &  11.07 &       *(0) \\
		& 0.100 &           0(9) &       *(19.45) &  13.58 &       *(0) &             8(10) &   242.90(11.46) &  11.87 &       *(0) \\
		& 0.120 &          0(10) &       *(17.16) &  14.58 &       *(0) &             7(10) &     44.15(9.33) &  13.01 &       *(0) \\
		& 0.140 &          0(10) &       *(15.67) &  15.62 &       *(0) &             8(10) &   296.48(10.05) &  13.72 &       *(0) \\
		& 0.160 &           0(9) &       *(13.15) &  16.50 &       *(0) &             8(10) &     80.18(8.43) &  13.36 &       *(0) \\
		& 0.180 &          0(10) &       *(12.03) &  17.25 &       *(0) &             9(10) &    114.21(3.46) &  13.74 &       *(0) \\
		\bottomrule
	\end{tabular}
\end{sidewaystable}

For both $N\in\{500, 1000\}$, mixing inequalities are very rarely separated when $\theta=0.02$, and they are never separated for large $\theta>0.02$. When $\theta>0.02$, since mixing inequalities are  never separated, the performances of \Mixing and \Improved are almost identical in terms of all of the statistics including root node statistics. The non-separation of mixing inequalities for large $\theta$ follows from the fact that the nominal region $\cX_{\SAA}(\cS)$ (and consequently the resulting mixing inequalities) is a worse approximation for the distributionally robust region $\cX_{\DR}(\cS)$ when $\theta$ gets larger. The same phenomenon was also observed in \cite{rhs2020} fo DR-CCP with RHS uncertainty. Thus, we report  the relevant statistics for \Mixing only for $\theta \leq 0.02$  in \cref{tab:mixing} in \cref{app:mixing}.  

We observe that when the radius $\theta$ is small, the resulting problems are much harder to solve. Such difficulty of the problems for small $\theta$ was also reported by~\citet{rhs2020} for DR-CCP with RHS uncertainty. For example, for $\theta=0.001$, none of the models is able to solve  any one of the ten instances for $N=500$ or $N=1000$ within the time limit of 3600 seconds. Despite this, we observe that in terms of the average final optimality gap for $\theta=0.001$ and $N=500$ ($N=1000$), \Mixing is the best with 6.10\% gap (8.79\%), followed by \Improved with 14.29\% (16.37\%), and finally \Basic with 18.85\% (23.87\%). 
In the case of $\theta=0.001$, it is noteworthy to point out that the average number of mixing inequalities separated is still relatively small; 89.6 in the case of $N=500$ and 271.5 for $N=1000$. 
However, for $\theta=0.001$ and $N=1000$, comparing \Improved and \Mixing, we note that the mixing inequalities 
improve the average root gap from 17.89\% to 16.43\%. This may appear to be a modest reduction, but surprisingly, it resulted in a reduction in the final optimality gap from 16.37\% to 8.79\% on average. %
Overall, these results highlight the positive computational impact of our developments in \Improved and \Mixing for small $\theta$.

As for the other $\theta$ values, we observe that \Improved consistently outperforms \Basic in terms of the number of instances solved for  all $N$ and $\theta$ values. This is particularly striking for $N=1000$. In this case, \Basic is unable to solve (with the exception of one instance out of ten for $\theta=0.04$) any of the ten randomly generated problem instances for any of the $\theta$ values within the time limit of 3600 seconds. In contrast, for all of the $\theta$ values greater than or equal to $0.1$, \Improved solves at least 7 out of 10 random instances within an average of less than 300 seconds. For the instances that were unsolved for $N=1000$ and $\theta\ge0.04$, the reported average final gaps for \Basic range between 12\% to 24.8\%, whereas the same range for \Improved is 3.5\% to 14\%. 
It may appear that for $N=500$ and $\theta\in\{0.04,0.06\}$, overall solution time of \Improved  is longer than \Basic solution times, but this is due to the fact that we are able to solve more instances with \Improved within the time limit (5 and 7 versus 2 and 1). For some  instances, even finding a feasible solution within the time limit is a challenge for both of the formulations, in particular for $N=1000$ and $\theta\in\{0.04,0.06\}$. 
Finally, observe that the solution time at the root node for  \Basic and \Improved are very similar, but the
 root node gap   of \Improved is  better than \Basic in most cases. This difference is more pronounced for the instances with $N=500$ and large $\theta$. The large improvement in the root gap for these instances translates into much faster overall solution times.

\subsection{Probabilistic resource planning}\label{sec:resource-planning}

We consider a probabilistic resource planning problem studied by~\citet{luedtke2014branch-and-cut} in the context of solving~\eqref{eq:saa-ccp}. Given a set of resources %
and a set of customer groups, %
the problem is to decide the quantity of each resource with the minimum cost to satisfy customer demands, i.e., 
\begin{equation}\label{eq:resource-panning}
\min_{\vx\in\bbR_+^D,\ \vy\in\bbR_+^{DP}}\left\{\vc^\top\vx:\ \begin{array}{ll}
\sum_{p\in [P]}y_{dp}\leq \rho_dx_d,&d\in[D]\\
\sum_{d\in [D]}\mu_{dp}y_{dp}\geq\lambda_p,& p\in[P]
\end{array}\right\},\tag{RSRC-Plan}
\end{equation}
where $D$ is the number of resources and $P$ is the number of customer types, $c_d$ is the unit production cost of resource $d\in[D]$ and $\rho_d\in(0,1]$ represents the random yield of resource $d$ (e.g., the fraction of planned production that is available), $\lambda_p$ denotes the random demand of customer group $p\in[P]$, %
$\mu_{dp}$ represents the random service rate of resource $d$ for customer group $p$. %
 Here, $x_d$ is the variable for the quantity of resource $d$ to be produced and $y_{dp}$ is the variable for the amount of resource $d$ allocated to customer group $p$. The constraints $\sum_{p\in [P]}y_{dp}\leq \rho_dx_d$ for $d\in[D]$ in~\eqref{eq:resource-panning} are \emph{resource assignment constraints}, and $\sum_{d\in [D]}\mu_{dp}y_{dp}\geq\lambda_p$ for $p\in[P]$ are  \emph{demand satisfaction constraints}. Let $(\vrho^i,\vmu^i,\vlambda^i)\in\bbR_+^D\times\bbR_+^{DP}\times\bbR_+^P$ be the realization of the random parameters under scenario $i\in[N]$. Then the DR-CCP formulation of~\eqref{eq:resource-panning} is given by %
\begin{subequations}\label{eq:rsrcp-re}
	\begin{align}
	\min\limits_{\vz, \vr, t, \vx,\vy} \quad & \vc^\top \vx\label{rsrcp:obj}\\
	\text{s.t.}\quad & \vz \in \{0,1\}^N,\ t \geq 0, \ \vr \geq \bm{0},\ \vx \geq \bm{0},\ \vy \geq \bm{0},\label{rsrcp:vars}\\
	& \epsilon\, t \geq \theta \left\|(\vx,\vy,\bm{1})^\top\right\|_* + \frac{1}{N} \sum_{i \in [N]} r^i,\label{rsrcp:conic}\\
	& M^i(1-z^i) \geq t-r^i, \quad i \in [N],\label{rsrcp:bigM1}\\
	& \rho_d^ix_d-  \sum_{p\in [P]}y_{dp} + M^iz^i \geq t-r^i, \quad i \in [N],\ d\in[D],\label{rsrcp:bigM2-d}\\
	& \sum_{d\in [D]}\mu_{dp}^iy_{dp}-\lambda_p^i + M^iz^i \geq t-r^i, \quad i \in [N],\ p\in[P]. \label{rsrcp:bigM2-p}%
	\end{align}
\end{subequations}

\subsubsection{Instance generation and big-$M$ computation}
We test instances with $D=10$, $P=20$, and $\epsilon=0.1$. For the cost vector $\vc$ and the random parameters $(\vrho,\vmu,\vlambda)$, we use the same setting of~\citet[Section 3.3]{luedtke2014branch-and-cut} (further details of instance generation can be found in~\citet{luedtke2014branch-and-cut-supplement}). This instance generation scheme ensures that each sample data $(\vrho^i,\vmu^i,\vlambda^i)$ is nonnegative almost surely. We empirically found that the problem becomes infeasible when $\theta$ gets above 0.01, so we test 10 different values $\{0.0001,0.001,0.002,\ldots,0.009\}$ for $\theta$.

Since the domain $\cX$ of~\eqref{eq:rsrcp-re} is not bounded, we need to choose a value for $M^i$ based on~\eqref{eq:bigMvalue-joint-opt}, i.e., for some optimal $(\vx,\vy)$ to \cref{eq:rsrcp-re}, set $M^i$ to be greater than or equal to
\begin{equation}\label{rsrcp:M-lb}
\max_{d\in[D]}\bigg\{\bigg|\rho_d^ix_d-  \sum_{p\in [P]}y_{dp}\bigg|\bigg\}\ \text{and} \ \max_{p\in[P]}\bigg\{\bigg|\sum_{d\in [D]}\mu_{dp}^iy_{dp}-\lambda_p^i\bigg|\bigg\}.
\end{equation}
Using the nonnegativity of data $(\vrho^i,\vmu^i,\vlambda^i)$, in \cref{remark:rsrcp-M} in~\cref{app:big-M-resource-planning}, we provide an upper bound on~\eqref{rsrcp:M-lb}, thereby providing a value for $M^i$.

Notice that the demand  constraints in~\eqref{eq:resource-panning} are covering type, so we can improve~\eqref{rsrcp:bigM2-p} by reducing $M^i$ based on \cref{lemma:cov-pack-quantiles}. However, the resource assignment constraints are neither covering nor packing type, hence  we cannot apply \cref{lemma:cov-pack-quantiles} to compute the reduced coefficient for~\eqref{rsrcp:bigM2-d}. So, %
in~\cref{remark:rsrcp-yield-lower-bounds} we describe our reduced coefficient computation for~\eqref{rsrcp:bigM2-d} based on~\eqref{eq:lower-bounds-joint}. %

\subsubsection{Performance Analysis} \label{subsec:resource-results}

We summarize our experiments  with $N\in\{100,300\}$ scenarios in \cref{tab:RP-full}.
Note that the resource planning problems with LHS uncertainty are significantly more difficult than  the portfolio optimization problems, thus the number of scenarios $N$ we can scale to were much smaller than in \cref{sec:portfolio}.

\begin{sidewaystable}
	\centering
	\caption{Results for resource planning}
	\label{tab:RP-full}
	\begin{tabular}{ll|rrrr|rrrr}
		\toprule
		&        & \multicolumn{4}{c}{\texttt{Basic}} & \multicolumn{4}{c}{\texttt{Improved}} \\
		$N$ & $\theta$ &       Slv(Fnd) &       Time(Gap) & R.time & R.gap(Fnd) &          Slv(Fnd) &       Time(Gap) & R.time & R.gap(Fnd) \\
		\midrule
		\multirow{10}{*}{100} & 0.0001 &          4(10) &  1711.87(54.86) &   5.55 &  100.00(4) &             6(10) &    979.56(4.64) &   7.82 &       *(0) \\
		& 0.0010 &          0(10) &        *(77.39) &   8.29 &       *(0) &             2(10) &   2226.70(9.03) &  10.61 &       *(0) \\
		& 0.0020 &          0(10) &        *(77.05) &   8.72 &       *(0) &             6(10) &   2082.06(8.11) &  11.09 &       *(0) \\
		& 0.0030 &          1(10) &  2497.16(61.80) &   8.58 &       *(0) &             8(10) &   1142.60(4.53) &  11.36 &       *(0) \\
		& 0.0040 &          2(10) &  1398.14(62.78) &   8.49 &       *(0) &             8(10) &  1184.37(10.26) &  11.44 &       *(0) \\
		& 0.0050 &          1(10) &  2168.77(62.35) &   8.75 &       *(0) &             6(10) &  1521.19(14.51) &  11.74 &       *(0) \\
		& 0.0060 &          3(10) &  2198.37(60.46) &   7.78 &       *(0) &             8(10) &  1058.29(12.99) &  11.52 &       *(0) \\
		& 0.0070 &          3(10) &  2266.62(61.80) &   7.62 &       *(0) &            10(10) &      2107.87(*) &  11.56 &       *(0) \\
		& 0.0080 &          4(10) &  2699.47(54.28) &   7.73 &       *(0) &            10(10) &       978.87(*) &  11.26 &       *(0) \\
		& 0.0090 &          7(10) &  2456.14(45.00) &   7.40 &       *(0) &            10(10) &       792.77(*) &  11.35 &       *(0) \\
		\cline{1-10}
		\multirow{10}{*}{300} & 0.0001 &          0(10) &        *(91.92) &  25.35 &       *(0) &             0(10) &         *(7.68) &  17.94 &   11.25(6) \\
		& 0.0010 &          0(10) &        *(94.81) &  33.05 &       *(0) &             0(10) &        *(13.22) &  44.19 &       *(0) \\
		& 0.0020 &          0(10) &        *(93.35) &  31.62 &       *(0) &             0(10) &        *(14.07) &  56.40 &       *(0) \\
		& 0.0030 &          0(10) &        *(94.17) &  28.80 &       *(0) &             0(10) &        *(13.54) &  63.93 &       *(0) \\
		& 0.0040 &          0(10) &        *(93.64) &  29.30 &       *(0) &             0(10) &        *(13.78) &  63.74 &       *(0) \\
		& 0.0050 &          0(10) &        *(94.47) &  28.45 &       *(0) &             0(10) &        *(13.87) &  64.74 &       *(0) \\
		& 0.0060 &          0(10) &        *(94.24) &  26.96 &       *(0) &             3(10) &  2283.06(11.67) &  67.35 &       *(0) \\
		& 0.0070 &          0(10) &        *(96.06) &  28.50 &       *(0) &             4(10) &  1815.24(11.08) &  72.66 &       *(0) \\
		& 0.0080 &          0(10) &        *(96.77) &  27.20 &       *(0) &             7(10) &  1729.98(10.80) &  74.28 &       *(0) \\
		& 0.0090 &          0(10) &        *(97.08) &  22.89 &       *(0) &            10(10) &       960.34(*) &  73.41 &       *(0) \\
		\bottomrule
		\end{tabular}
\end{sidewaystable}

We continue to see that when the radius $\theta$ is small, the resulting problems are much harder to solve. 
For example, for $\theta\in\{0.001,0.002\}$, \Basic  is not able to solve  any one of the ten instances for $N=100$ within the time limit. That said, for the really small radius of $\theta=0.0001$, the instances are slightly easier with more instances solved to optimality than $\theta=0.001$ for all models.
For $N=100$, as $\theta$ increases, more instances are solved by \Basic, however, even for the largest $\theta$, i.e., $\theta=0.009$, there are three instances for which \Basic is not able to find an optimal solution. In contrast \Improved is able to solve all instances to optimality for $\theta\in\{0.007,0.008,0.009\}$. 
 Furthermore, for $N=300$, \Basic is not able to solve any of the instances for \emph{any} $\theta$. These instances are simply intractable for \Basic, which terminates with over 90\% optimality gap in all test cases. In contrast, for the largest $\theta$ tested, \Improved finds an optimal solution to all ten instances well within the time limit.

Comparing  the quality of the solutions  at termination, we observe that the optimality gaps for \Basic are extremely large in these instances, ranging from 45\% for $N=100, \theta=0.009$ to 97.08\% for $N=300, \theta=0.009$. In contrast, the optimality gaps for \Improved range from 0\% for various settings including   $N=100, \theta\in\{0.007,0.008,0.009\}$ and $N=300, \theta=0.009$ to at most 14.51\% for $N=100, \theta=0.005$.

It is interesting to note that in most cases, an integer feasible solution is not found at the root node in both \Basic and \Improved, so the root gap information is not available, except \Basic is able to find a feasible solution for four instances for $N=100, \theta=0.0001$, albeit with 100\% optimality gap.
This observation is reversed for $N=300, \theta=0.0001$, when \Basic is unable to report a root gap for any instance, whereas \Improved is able to report an average gap of 11.25\% for six instances. 

A few comments are in order for the performance of \Mixing. Once again, we only report these results for $\theta \leq 0.001$ in \cref{app:mixing}, since we observed that no mixing inequalities are separated for $\theta > 0.001$ for any $N\in\{100,300\}$. That said, \Mixing is quite effective for $\theta=0.0001$. For $N=100, \theta=0.0001$, an average of 687.3  mixing inequalities are separated, and \Mixing is able to solve nine instances to optimality (three more than \Improved), and in smaller average solution time (800 seconds versus 979 seconds). The effectiveness of \Mixing decreases when $\theta=0.001$: in this case, only an average of 12.5 mixing inequalities are separated when $N=100$. Indeed, in this case, \Mixing solves one fewer instance to optimality than \Improved and there is only a moderate decrease in the optimality gap (7.99\% for \Mixing versus 9.03\% for \Improved). More mixing cuts are separated on average  for $N=300$: 4337.9 and 223.5, respectively, for $\theta=0.0001$ and $\theta=0.001$. Despite this, these instances are still unsolvable within the time limit. Nevertheless, there is a moderate decrease in the final gap from 7.68\% to 7.55\% for $\theta=0.0001$ and from 13.22\% to 11.88\% for $\theta=0.001$. Finally, with respect to root gaps, in contrast to \Improved, the only interesting statistic for \Mixing is that  $N=300, \theta=0.0001$, \Mixing achieves a smaller root gap of  10.28\%, on average, but over fewer instances that solve to optimality (five instead of  six) than \Improved. %

In summary, we observe that our proposed \Improved formulation drastically increases our ability to obtain high-quality solutions to  \eqref{eq:dr-ccp}.  \Mixing provides additional improvement for cases when $\theta$ is small.

\section*{Acknowledgments}
This research is supported, in part, by ONR grant N00014-19-1-2321, by the Institute for Basic Science (IBS-R029-C1, IBS-R029-Y2), Award N660011824020 from the DARPA Lagrange Program and NSF Award 1740707.

\bibliographystyle{abbrvnat}
\bibliography{mybibfile}

\newpage

\appendix
\section{Additional Results for Portfolio Optimization}\label{sec:portfolio_small}

In this section, we present additional results for portfolio optimization problems from Section~\ref{subsec:portfolio-results} for $N\in\{100, 300\}$. The results in Tables \ref{tab:port-full-N100} and \ref{tab:port-full-N300} demonstrate once again the improved performance of our approach for \emph{all} parameter regimes, including small $N$ and large $\theta$. Note that while the basic formulation does not have trouble solving portfolio instances with $N=100$ (and also for $N=300$ and $\theta \geq 0.4$), we still observe that the improved formulation solves noticeably more quickly for these parameter regimes.
 
	\begin{table}[h!t]
	\centering
	\caption{Results for portfolio for $N=100$}
	\label{tab:port-full-N100}
	\begin{tabular}{l|rrrr|rrrr}
		\toprule
		{} & \multicolumn{4}{c}{\texttt{Basic}} & \multicolumn{4}{c}{\texttt{Improved}} \\
		{$\theta$} &       Slv(Fnd) & Time(Gap) & R.time & (Fnd)R.gap &          Slv(Fnd) & Time(Gap) & R.time & (Fnd)R.gap \\
		\midrule
		0.001  &         10(10) &   0.75(*) &   0.15 &  (10)18.60 &            10(10) &   0.29(*) &   0.04 &  (10)16.25 \\
		0.020  &         10(10) &   0.61(*) &   0.40 &  (10)18.92 &            10(10) &   0.59(*) &   0.11 &  (10)29.14 \\
		0.040  &         10(10) &   0.94(*) &   0.74 &  (10)19.72 &            10(10) &   0.56(*) &   0.30 &  (10)14.92 \\
		0.060  &         10(10) &   1.18(*) &   1.00 &  (10)15.85 &            10(10) &   0.46(*) &   0.30 &   (10)7.96 \\
		0.080  &         10(10) &   1.57(*) &   1.35 &  (10)17.63 &            10(10) &   0.48(*) &   0.42 &   (10)4.31 \\
		0.100  &         10(10) &   1.21(*) &   0.98 &  (10)19.58 &            10(10) &   0.43(*) &   0.38 &   (10)1.99 \\
		0.120  &         10(10) &   1.15(*) &   0.90 &  (10)21.61 &            10(10) &   0.48(*) &   0.48 &   (10)0.10 \\
		0.140  &         10(10) &   1.08(*) &   0.81 &  (10)21.53 &            10(10) &   0.37(*) &   0.37 &   (10)0.00 \\
		0.160  &         10(10) &   0.83(*) &   0.56 &  (10)19.89 &            10(10) &   0.34(*) &   0.34 &   (10)0.00 \\
		0.180  &         10(10) &   0.81(*) &   0.57 &  (10)19.41 &            10(10) &   0.32(*) &   0.32 &   (10)0.08 \\
		\bottomrule
	\end{tabular}
\end{table}

\begin{table}[h!t]
	\centering
	\caption{Results for portfolio for $N=300$}
	\label{tab:port-full-N300}
	\begin{tabular}{l|rrrr|rrrr}
		\toprule
		{} & \multicolumn{4}{c}{\texttt{Basic}} & \multicolumn{4}{c}{\texttt{Improved}} \\
		{$\theta$} &       Slv(Fnd) &     Time(Gap) & R.time & (Fnd)R.gap &          Slv(Fnd) &     Time(Gap) & R.time & (Fnd)R.gap \\
		\midrule
		0.001  &          0(10) &      *(13.51) &   0.36 &  (10)23.09 &             0(10) &       *(8.61) &   0.14 &  (10)17.80 \\
		0.020  &          5(10) &  479.47(9.38) &   0.57 &  (10)23.48 &             7(10) &  556.67(4.22) &   1.00 &  (10)32.42 \\
		0.040  &          7(10) &  685.71(4.64) &   0.50 &  (10)26.73 &            10(10) &     192.80(*) &   1.44 &  (10)20.88 \\
		0.060  &         10(10) &     569.36(*) &   1.58 &  (10)30.69 &            10(10) &      30.27(*) &   1.84 &  (10)16.04 \\
		0.080  &         10(10) &     214.67(*) &   2.37 &  (10)29.59 &            10(10) &      10.54(*) &   1.97 &  (10)11.48 \\
		0.100  &         10(10) &     128.98(*) &   2.71 &  (10)27.37 &            10(10) &       5.78(*) &   1.94 &   (10)7.16 \\
		0.120  &         10(10) &      51.87(*) &   2.41 &  (10)26.34 &            10(10) &       4.53(*) &   1.87 &   (10)5.89 \\
		0.140  &         10(10) &      16.90(*) &   2.48 &  (10)24.88 &            10(10) &       3.07(*) &   1.69 &   (10)4.58 \\
		0.160  &         10(10) &      24.67(*) &   2.52 &  (10)23.75 &            10(10) &       2.79(*) &   1.92 &   (10)4.51 \\
		0.180  &         10(10) &      21.71(*) &   2.67 &  (10)22.22 &            10(10) &       2.29(*) &   1.67 &   (10)3.46 \\
		\bottomrule
	\end{tabular}
\end{table}

\section{Big-$M$ Computation for Resource Planning}\label{app:big-M-resource-planning}

In this appendix we describe our big-$M$ calculation used in~\eqref{eq:rsrcp-re} of the probabilistic resource planning problem studied in~\cref{sec:resource-planning}.
Recall that each sample data $(\vrho^i,\vmu^i,\vlambda^i)$ is nonnegative almost surely. Recall also that the domain $\cX$ of~\eqref{eq:rsrcp-re} is not bounded, so we need to choose a value for $M^i$ based on~\eqref{eq:bigMvalue-joint-opt}, i.e., for some optimal $(\vx,\vy)$ to \cref{eq:rsrcp-re}, $M^i$ must be selected to be greater than or equal to the quantity in~\eqref{rsrcp:M-lb}.
Next, we provide an upper bound on~\eqref{rsrcp:M-lb}, thereby providing a value for $M^i$.

\begin{remark}\label{remark:rsrcp-M}
Let $(\vx,\vy)$ be an optimal solution to~\eqref{eq:rsrcp-re}. Note that if $\mu_{dp}^i=0$ for all $i\in[N]$, we may assume that $y_{dp}=0$, for otherwise, reducing $y_{dp}=0$ does not affect~\eqref{rsrcp:bigM2-p}, and is less restrictive for~\eqref{rsrcp:bigM2-d}. Consider a pair of $d\in[D]$ and $p\in[P]$ such that $y_{dp}>0$. Since $(\vx,\vy)$ satisfies the nominal chance constraint with nonzero probability, there exists $i\in [N]$ such that $\sum_{d\in [D]}\mu_{dp}^iy_{dp}-\lambda_p^i\geq0$. In fact, we may assume that there exists $j\in [N]$ such that $\mu_{dp}^{j}>0$ and equality $\sum_{d\in [D]}\mu_{dp}^{j}y_{dp}-\lambda_p^{j}=0$ holds, for otherwise, one can slightly reduce $y_{dp}$ without affecting the validity of $(\vx,\vy)$. Hence, it follows that $y_{dp}\leq \lambda_p^{j}/\mu_{dp}^{j}$ if $y_{dp}>0$. Let $U_{dp}$ be defined as
\[
U_{dp}=\begin{cases}
\max\left\{{\lambda_p^{i}}/{\mu_{dp}^{i}}:\ \mu_{dp}^{i}>0,\ i\in[N]\right\},&\text{if}\ \mu_{dp}^{i}>0 \ \text{for some} \ i\in[N],\\
0,&\text{otherwise}.
\end{cases}
\]
Then, for every $d\in[D]$ and $p\in[P]$, we have $0\leq y_{dp}\leq U_{dp}$. This implies that
\begin{equation}\label{rsrcp:M-lb1}
\max_{p\in[P]}\bigg\{\bigg|\sum_{d\in [D]}\mu_{dp}^iy_{dp}-\lambda_p^i\bigg|\bigg\}\leq\max\limits_{p\in[P]}\bigg\{\max\bigg\{\lambda_p^i,\ \sum_{d\in [D]}\mu_{dp}^iU_{dp}-\lambda_p^i\bigg\}\bigg\}.
\end{equation}
Now let us consider the other term inside~\eqref{rsrcp:M-lb}. Since $(\vx,\vy)$ satisfies the nominal chance constraint with nonzero probability, there exists $i\in [N]$ such that $\rho_d^ix_d-  \sum_{p\in [P]}y_{dp}\geq0$, and as before, we may assume that equality $\rho_d^jx_d-  \sum_{p\in [P]}y_{dp}=0$ holds for some $j\in[N]$. Hence, it follows that for $i\in[N]$, $
\rho_d^ix_d-  \sum_{p\in [P]}y_{dp}=({\rho_d^i}/{\rho_d^j}-1)\sum_{p\in [P]}y_{dp}.$
Let $\rho_d^{\max}:=\max\{\rho_d^j:\ j\in[N]\}$ and $\rho_d^{\min}:=\min\{\rho_d^j:\ j\in[N]\}$. Then
\begin{equation}\label{rsrcp:M-lb2}
\max\limits_{d\in[D]}\bigg\{\bigg|\rho_d^ix_d-  \sum_{p\in [P]}y_{dp}\bigg|\bigg\}\leq\max\limits_{d\in[D]}\bigg\{\max\bigg\{1-\frac{\rho_d^i}{\rho_d^{\max}},\ \frac{\rho_d^i}{\rho_d^{\min}}-1\bigg\}\cdot\sum_{p\in[P]}U_{dp}\bigg\},
\end{equation}
and $M^i$ can be set to the maximum of the two values given in the right-hand sides of~\eqref{rsrcp:M-lb1} and~\eqref{rsrcp:M-lb2}.
\ifx\flagJournal\true \epr \fi
\end{remark}

While the demand  constraints in~\eqref{eq:resource-panning} are covering type and so we can use~\cref{lemma:cov-pack-quantiles} to improve~\eqref{rsrcp:bigM2-p} by reducing $M^i$, the resource assignment constraints in~\eqref{eq:resource-panning} are neither covering nor packing type, hence we cannot apply~\cref{lemma:cov-pack-quantiles} to compute the reduced coefficient for~\eqref{rsrcp:bigM2-d}. Instead, we compute the reduced coefficient for~\eqref{rsrcp:bigM2-d} based on~\eqref{eq:lower-bounds-joint} as follows. 
\begin{remark}\label{remark:rsrcp-yield-lower-bounds}
By \cref{remark:rsrcp-M}, at optimality we have $0 \leq y_{d'p} \leq U_{d'p}$ for all $d' \in [D]$, $p \in [P]$. Then, for $i,j\in[N]$,
\begin{align}
&\bar h_d^j(-\vA\vxi_d^i- \va_d)\label{eq:rsrcp-quantiles-bound}\\
&\geq\min\limits_{\vx\geq\bm{0}, \vy\geq\bm{0}}\left\{\rho_d^ix_d-  \sum_{p\in [P]}y_{dp}: \begin{array}{l}
\sum_{d'\in[D]}\mu_{d'p}^jy_{d'p}\geq\lambda_p^j, \ p\in[P],\\ \rho_d^jx_d\geq \sum_{p\in [P]}y_{dp},\\
y_{d'p} \leq U_{d'p}, \ d' \in [D], \ p \in [P]
\end{array}
\right\}\notag\\
&\geq\min\limits_{\vx\geq\bm{0}, \vy\geq\bm{0}}\left\{\left(\frac{\rho_d^i}{\rho_d^j}-1\right)\sum_{p\in [P]}y_{dp}: \begin{array}{l}
\sum_{d'\in[D]}\mu_{d'p}^jy_{d'p}\geq\lambda_p^j, \ p\in[P],\\
y_{d'p} \leq U_{d'p}, \ d' \in [D], \ p \in [P]
\end{array}\right\}.\notag
\end{align}
When $({\rho_d^i}/{\rho_d^j}-1)\geq 0$, we set $y_{d'p} = U_{d'p}$ for $d' \neq d$. Then for each $p \in [P]$, we set
\[ y_{dp} = L_{dp}^j := \begin{cases}
\max\left\{0,\lambda_p^j - \sum_{d'\in[D], d' \neq d} U_{d'p}\right\}/{\mu_{dp}^j}, &\mu_{dp}^j > 0\\
0, & \mu_{dp}^j = 0.
\end{cases} \]
It follows from~\eqref{eq:rsrcp-quantiles-bound} that $\bar h_d^j(-\vA\vxi_d^i- \va_d)\geq({\rho_d^i}/{\rho_d^j}-1)\sum_{p\in[P]}L_{dp}^j$ when $({\rho_d^i}/{\rho_d^j}-1)\geq 0$. When $({\rho_d^i}/{\rho_d^j}-1)<0$, since $0 \leq y_{dp}\leq U_{dp}$ at optimality, we obtain $\bar h_d^j(-\vA\vxi_d^i- \va_d)\geq({\rho_d^i}/{\rho_d^j}-1)\sum_{p\in[P]}U_{dp}$. Based on these lower bounds on $\bar h_d^j(-\vA\vxi_d^i- \va_d)$, we can compute a lower bound on $q_p^i$. Note that in the definition of $L_{pd}^j$, if  $\lambda_p^j - \sum_{d'\in[D], d' \neq d} U_{d'p} > 0$ but $\mu_{dp}^j = 0$, then scenario $j$ is infeasible, so we can set $z_j = 1$.
\ifx\flagJournal\true \epr \fi
\end{remark}

\section{Computational Results for Mixing Inequalities}\label{app:mixing}

\cref{tab:mixing} presents computational results for \texttt{Mixing} described in \cref{sec:experiments}.

\begin{table}[h!]
	\centering
	\caption{Results for \texttt{Mixing}.}
	\label{tab:mixing}
	\begin{tabular}{cll|rrrrr}
		\toprule
		& $N$ & $\theta$ &        Slv(Fnd) &      Time(Gap) & R.time & R.gap(Fnd) &    Cuts \\
		\midrule
		\multirow{4}{*}{\rotatebox[origin=c]{90}{Portfolio}} & \multirow{2}{*}{500} & 0.001 &           0(10) &   *(6.10) &   0.62 &  16.99(10) &   89.6 \\
		& & 0.020 &           0(10) &  *(12.74) &   2.54 &  34.15(10) &    0.0 \\
		\cline{2-8}
		& \multirow{2}{*}{1000} & 0.001 &           0(10) &   *(8.79) &   2.28 &  16.43(10) &  271.5 \\
		& & 0.020 &            0(9) &  *(17.52) &   3.86 &   23.46(9) &    0.2 \\
		\cline{1-8}
		\multirow{4}{*}{\rotatebox[origin=c]{90}{Res. plan.}} & \multirow{2}{*}{100} & 0.0001 &           9(10) &   800.84(6.53) &   8.11 &       *(0) &   687.3 \\
		& & 0.0010 &           1(10) &  2307.23(7.99) &  11.66 &       *(0) &    12.5 \\
		\cline{2-8}
		& \multirow{2}{*}{300} & 0.0001 &           0(10) &        *(7.55) &  39.78 &   10.28(5) &  4337.9 \\
		& & 0.0010 &           0(10) &       *(11.88) &  50.85 &       *(0) &   223.5 \\
		\bottomrule
	\end{tabular}
\end{table}

\section{Other Relaxations of \eqref{eq:dr-ccp}}\label{sec:other_relaxations}

\citet[Theorem 3]{xie2018distributionally} provides a better relaxation of~\eqref{eq:dr-ccp} than the ordinary \eqref{eq:ccp}. %
However, we present an argument that the relaxation in \citep[Theorem 3]{xie2018distributionally} cannot be linked with \eqref{eq:dr-ccp} in the same manner as what we do in this paper. To illustrate this, let us consider an individual chance constraint with closed safety set
\[ \cS^c(x) = \left\{ \xi : (b-A^\top x)^\top \xi + d - a^\top x \geq 0 \right\}. \]
The binary variables $\{z^i\}_{i \in [N]}$ appearing in \eqref{eq:joint}, the formulation for \eqref{eq:dr-ccp}, essentially model the disjunction
\[\underbrace{(b-A^\top x)^\top \xi^i + d - a^\top x \geq 0}_{z^i=0} \text{ or } \underbrace{(b-A^\top x)^\top \xi^i + d - a^\top x < 0}_{z^i=1}.\]
The intuitive reason why we can link \eqref{eq:saa-reformulation} and \eqref{eq:joint} is because the binary variables in \eqref{eq:saa-reformulation} model the exact same disjunction. On the other hand, \citep[Theorem 3]{xie2018distributionally} states that a valid inequality for $\inf_{\bbP \in \cF_N(\theta)} \bbP[(b-A^\top x)^\top \xi + d - a^\top x \geq 0] \geq 1-\epsilon$ is the modified nominal chance constraint
\[ \bbP_N\left[ (b-A^\top x)^\top \xi + d - a^\top x \geq \frac{\theta}{\epsilon} \|b-A^\top x\|_* \right] \geq 1-\epsilon. \]
As stated in \citep[Corollary 4]{xie2018distributionally}, this can also be formulated as a MIP, with binary variabes $u^i$ that model the disjunction
\[\underbrace{(b-A^\top x)^\top \xi^i + d - a^\top x \geq \frac{\theta}{\epsilon} \|b-A^\top x\|_*}_{u^i=0} \text{ or } \underbrace{(b-A^\top x)^\top \xi^i + d - a^\top x < \frac{\theta}{\epsilon} \|b-A^\top x\|_*}_{u^i=1}.\]
This is a fundamentally different disjunction than the one in \eqref{eq:joint}. Indeed, if we have a scenario $i$ for which
\[ 0 < (b-A^\top x)^\top \xi^i + d - a^\top x < \frac{\theta}{\epsilon} \|b-A^\top x\|_*, \]
then $z^i = 0$ but $u^i=1$. Therefore, we cannot link the binary variables $z^i$ and $u^i$ in the same manner as we did for the nominal CCP formulation, since there is no a priori reason to prevent such a scenario occurring. Thus,  the strengthening procedure from our paper cannot be adapted to work with the relaxation from \citep{xie2018distributionally}.

\end{document}